
\RequirePackage{amsthm}
\documentclass[sn-mathphys-num]{sn-jnl}


\usepackage{graphicx}%
\usepackage{multirow}%
\usepackage{amsmath,amssymb,amsfonts}%
\usepackage{amsthm}%
\usepackage{mathrsfs}%
\usepackage[title]{appendix}%
\usepackage{xcolor}%
\usepackage{textcomp}%
\usepackage{manyfoot}%
\usepackage{booktabs}%
\usepackage{algorithm}%
\usepackage{algorithmicx}%
\usepackage{algpseudocode}%
\usepackage{listings}%

\usepackage{adjustbox}
\usepackage{bm}
\usepackage{color}
\usepackage{comment}
\usepackage{pgf}
\usepackage{pgfplots}
\usepgfplotslibrary{groupplots}
\usepackage[labelformat=simple]{subcaption}

\usepackage{tikz}
\usepackage[normalem]{ulem}
\usetikzlibrary{matrix,patterns,positioning,decorations,arrows,shapes,decorations.markings,calc}

\usepackage[symbol]{footmisc}


\theoremstyle{thmstyleone}%
\newtheorem{theorem}{Theorem}
\newtheorem{assumption}[theorem]{Assumption}
\newtheorem{proposition}[theorem]{Proposition}%
\newtheorem{corollary}[theorem]{Corollary}

\theoremstyle{thmstyletwo}%
\newtheorem{example}{Example}%

\theoremstyle{thmstylethree}%
\newtheorem{definition}{Definition}%

\raggedbottom

\def \bR {\mathbb{R}}

\begin{document}

\title[Article Title]{Efficient Branching Rules for Optimizing Range and Order-Based Objective Functions\footnote[2]{This is a revised and extended version of a conference paper presented at IPCO 2024 \cite{vanrossum2024range}.}}


\author*[1]{\fnm{Bart} \spfx{van} \sur{Rossum}}\email{vanrossum@ese.eur.nl}

\author*[2]{\fnm{Rui} \sur{Chen}}\email{rchen@cuhk.edu.cn}

\author[3]{\fnm{Andrea} \sur{Lodi}}\email{andrea.lodi@cornell.edu}

\affil*[1]{\orgdiv{Econometric Institute}, \orgname{Erasmus University Rotterdam}, \orgaddress{\city{Rotterdam}, \country{the Netherlands}}}

\affil[2]{\orgname{The Chinese University of Hong Kong, Shenzhen}, \orgaddress{Guangdong, China}}

\affil[3]{\orgname{Cornell Tech}, \orgaddress{\city{New York}, \country{USA}}}


\abstract{We consider range minimization problems featuring exponentially many variables, as frequently arising in fairness-oriented or bi-objective optimization. While branch and price is successful at solving cost-oriented problems with many variables, the performance of classical branch-and-price algorithms for range minimization is drastically impaired by weak linear programming relaxations. We propose range branching, a generic branching rule that directly tackles this issue and can be used on top of problem-specific branching schemes. We show several desirable properties of range branching and show its effectiveness on a series of instances of the fair capacitated vehicle routing problem and fair generalized assignment problem. Range branching significantly improves multiple classical branching schemes in terms of computing time, optimality gap, and size of the branch-and-bound tree, allowing us to solve many more large instances than classical methods. Moreover, we show how range branching can be successfully generalized to order-based objective functions, such as the Gini deviation.}

\keywords{Range minimization, Branch and price, Vehicle routing, Generalized assignment, Fairness}



\maketitle

\section{Introduction}
\label{sec:intro}

A growing stream of literature focuses on fairness-oriented optimization \cite{jozefowiez2009evolutionary,matl2018workload,matl2019workload,rossum2023optimizing,tsang2023unified,xinying2023guide, karsu2015inequity, lodi2022fairness}, where not only the efficiency but also the fairness of solutions are taken into account. A popular fairness measure is the range, defined as the maximum minus the minimum of the quantities of interest \cite{matl2018workload,matl2019workload,rossum2023optimizing,tsang2023unified}. In many practical applications, the range and hence the fairness of the solution can be significantly improved with a minor loss in efficiency. Relevant examples include vehicle routing \cite{matl2019workload, rossum2023optimizing}, crew rostering \cite{breugem2022equality}, order picking \cite{vanheusden2020operational}, and the $p$-median problem \cite{ljubic2024benders,marin2020fresh}. Indeed, Figure~\ref{fig:cvrp} presents an example where the range between the distances of vehicle routes can be reduced by 26.2\% by tolerating an efficiency loss of 0.05\% compared to the most cost-efficient solution. In addition to fairness applications, range minimization is often encountered as one of the objectives in bi-objective optimization \cite{jozefowiez2009evolutionary} or even as a stand-alone problem \cite{puerto2012range}. 

\begin{figure}[htb!]
\centering
\begin{subfigure}[t]{0.48\textwidth}
\centering
\begin{tikzpicture}[scale=1]
\begin{axis}[
    axis lines = none,
    width = \textwidth, 
    height = \textwidth,
    disabledatascaling,
    xmin=-50, xmax=950, ymin=-50, ymax=950,
    ]

 \tikzset{every node/.style={draw=black, fill=black!15, shape=circle, scale=0.5}}

        \node[shape=rectangle, scale=2, fill=white] at (854, 504) (0) { };
        \node at (236, 176) (1) { };
        \node at (377, 404) (2) { };
        \node at (380, 468) (3) { };
        \node at (570, 752) (4) { };
        \node at (620, 146) (5) { };
        \node at (616, 746) (6) { };
        \node at (675, 114) (7) { };
        \node at (556, 640) (8) { };
        \node at (99, 777) (9) { };
        \node at (12, 194) (10) { };
        \node at (920, 116) (11) { };
        \node at (846, 384) (12) { };
        \node at (525, 23) (13) { };
        \node at (199, 348) (14) { };
        \node at (813, 875) (15) { };
 
	\draw[draw = black, densely dashed] (0) -- (1);
        \draw[draw = black, densely dashed] (1) -- (10);
        \draw[draw = black, densely dashed] (10) -- (14);
        \draw[draw = black, densely dashed] (14) -- (9);
        \draw[draw = black, densely dashed] (9) -- (0);
        \draw[draw = black, densely dashed] (0) -- (8);
        \draw[draw = black, densely dashed] (8) -- (3);
        \draw[draw = black, densely dashed] (3) -- (2);
        \draw[draw = black, densely dashed] (2) -- (0);
        \draw[draw = black, densely dashed] (0) -- (11);
        \draw[draw = black, densely dashed] (11) -- (7);
        \draw[draw = black, densely dashed] (7) -- (13);
        \draw[draw = black, densely dashed] (13) -- (5);
        \draw[draw = black, densely dashed] (5) -- (0);
        \draw[draw = black, densely dashed] (0) -- (12);
        \draw[draw = black, densely dashed] (12) -- (0);
        \draw[draw = black, densely dashed] (0) -- (4);
        \draw[draw = black, densely dashed] (4) -- (6);
        \draw[draw = black, densely dashed] (6) -- (15);
        \draw[draw = black, densely dashed] (15) -- (0);
\end{axis}
\end{tikzpicture}
\caption{Total distance: 6,178. Range: 2,170.}
\end{subfigure}
\hspace{0.25cm}
\begin{subfigure}[t]{0.48\textwidth}
\centering
\begin{tikzpicture}[scale = 1]
\begin{axis}[
    axis lines = none,
    width = \textwidth, 
    height = \textwidth,
    disabledatascaling,
    xmin=-50, xmax=950, ymin=-50, ymax=950,
    ]

 \tikzset{every node/.style={draw=black, fill=black!15, shape=circle, scale=0.5}}

        \node[shape=rectangle, scale=2, fill=white] at (854, 504) (0) { };
        \node at (236, 176) (1) { };
        \node at (377, 404) (2) { };
        \node at (380, 468) (3) { };
        \node at (570, 752) (4) { };
        \node at (620, 146) (5) { };
        \node at (616, 746) (6) { };
        \node at (675, 114) (7) { };
        \node at (556, 640) (8) { };
        \node at (99, 777) (9) { };
        \node at (12, 194) (10) { };
        \node at (920, 116) (11) { };
        \node at (846, 384) (12) { };
        \node at (525, 23) (13) { };
        \node at (199, 348) (14) { };
        \node at (813, 875) (15) { };
 
	\draw[draw = black, dashed] (0) -- (12);
        \draw[draw = black, dashed] (12) -- (0);
        \draw[draw = black, dashed] (0) -- (1);
        \draw[draw = black, dashed] (1) -- (10);
        \draw[draw = black, dashed] (10) -- (14);
        \draw[draw = black, dashed] (14) -- (2);
        \draw[draw = black, dashed] (2) -- (0);
        \draw[draw = black, dashed] (0) -- (3);
        \draw[draw = black, dashed] (3) -- (9);
        \draw[draw = black, dashed] (9) -- (8);
        \draw[draw = black, dashed] (8) -- (0);
        \draw[draw = black, dashed] (0) -- (4);
        \draw[draw = black, dashed] (4) -- (6);
        \draw[draw = black, dashed] (6) -- (15);
        \draw[draw = black, dashed] (15) -- (0);
        \draw[draw = black, dashed] (0) -- (5);
        \draw[draw = black, dashed] (5) -- (13);
        \draw[draw = black, dashed] (13) -- (7);
        \draw[draw = black, dashed] (7) -- (11);
        \draw[draw = black, dashed] (11) -- (0);
\end{axis}
\end{tikzpicture}
\caption{Total distance: 6,181. Range: 1,601.}
\end{subfigure}
\caption{Efficient (left) and fair (right) vehicle routing solutions.}
\label{fig:cvrp}
\end{figure}
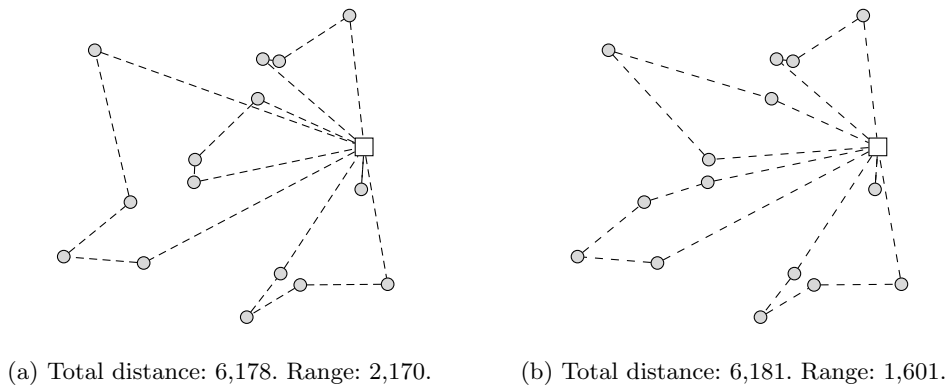

We focus on range minimization problems featuring a number of columns that is prohibitively large for mixed-integer linear programming solvers (MILP-solvers, e.g., Gurobi \cite{gurobi}). This is commonly observed in railway crew scheduling, vehicle routing, and airline crew pairing problems \cite{desaulniers2006column}. Here, formulations with a huge number of variables are preferred as they typically display a tighter linear programming (LP) relaxation and a less symmetric structure than their compact counterparts \cite{barnhart1998branch}. When a classical cost-based objective is used, branch and price is the state-of-the-art solution approach for this type of problems. At each node of the branch-and-bound tree, the LP relaxation is solved through column generation. Here, only a small subset of all possible columns is initially considered, and a pricing algorithm is used to solve the pricing problem of identifying columns that potentially improve the objective value. A branching scheme is used to obtain integer solutions, and all branching decisions must be compatible with the pricing algorithm. We refer to \cite{barnhart1998branch} for a detailed introduction to branch and price.

Unfortunately, formulations for range minimization problems often suffer from poor LP relaxations that drastically impair the performance of classical branch-and-price algorithms. Consider, for example, the capacitated vehicle routing problem (CVRP) with route balancing, which can be phrased as a bi-objective problem where range minimization is one of the objectives. While the branch-and-price (-and-cut) scheme is highly effective for the classical cost-oriented CVRP \cite{costa2019exact}, the range minimization problem is known to suffer from very weak dual bounds \cite{rossum2023optimizing}.

The phenomenon of weak dual bounds has also been repeatedly observed in min-max routing problems, where the goal is to minimize the length of the longest route. In a sense, these problems are special cases of range minimization problems, with a zero objective coefficient on the length of the shortest route. Examples include the min-max multiple traveling salesman problem \cite{francca1995minmax} and the min-max open vehicle routing problem \cite{applegate2002solution,lysgaard2020matheuristic,marinho2021}. For min-max routing problems, the bisection method (a form of dichotomous search) can be efficiently applied to obtain optimal solutions on instances with up to 150 nodes \cite{francca1995minmax,marinho2021}. Unfortunately, this approach cannot be generalized to range minimization: since the solution to range minimization problems is characterized by both a minimum and maximum payoff, binary search along one dimension is not possible. As a result, enumeration schemes \cite{matl2019workload} and a wide range of heuristics have been proposed for the fairness-oriented CVRP (see \cite{matl2018workload} for an overview).

Our main contribution is to propose an efficient exact branch-and-price framework for large-scale range minimization problems. In particular, we propose a specialized branching rule, which we refer to as \emph{range branching}, that directly tackles the poor LP relaxation of range minimization formulations. It can be used on top of problem-specific branching schemes, and allows one to effectively leverage branch-and-price techniques commonly applied to cost-oriented optimization problems. We show some promising properties of range branching, and evaluate its effectiveness on a series of fairness-oriented capacitated vehicle routing and generalized assignment problems. Range branching significantly improves the performance of multiple classical branching schemes in terms of computing time, optimality gap, and size of the branch-and-bound tree, allowing us to solve many more instances to optimality and outperform a standard MILP-solver. This result appears to be mainly driven by strongly improved dual bound behaviour. Finally, we successfully show how range branching can be generalized to order-based minimization problems, and use this result to minimise the Gini deviation in CVRP.

The remainder of this paper is structured as follows. We formally define the range minimization problem in Section~\ref{sec:problem} and present range branching in Section~\ref{sec:branching}. In Sections~\ref{sec:cvrp} and~\ref{sec:gap}, we present the results of computational experiments on the fair capacitated vehicle routing problem and fair generalized assignment problem, respectively. We extend our range branching rule to the case of general order-based objective functions in Section~\ref{sec:order}, and conclude in Section~\ref{sec:conclusion}.

\section{Range Minimization Problem}
\label{sec:problem}

We formally define the range minimization problem in Section~\ref{subsec:problem_description}, and discuss mathematical formulations for this problem in Section~\ref{subsec:problem_formulation}.

\subsection{Problem Description}
\label{subsec:problem_description}
Throughout this work, we consider problems with the following structure. Given a set of $N$ columns (implicitly), the problem requires to select a subset of them, with decision variables $\bm{x} = (x_1, \dots, x_N)\in \{0, 1\}^N$ being the indicator variables of the selected columns.\footnote[2]{For simplicity, we do not assume additional auxiliary variables. However, these can be easily incorporated into our approach.} We assume that some linear constraints $A \bm{x} \leq \bm{b}$ must be respected. These constraints can include efficiency lower bounds, constraints on the number of selected columns, and other problem-specific constraints (e.g., in the case of the CVRP, regular customer partition constraints). To summarize, the feasible region of the decision variables is given by $\mathcal{X} = \{ \bm{x} \in \{0, 1\}^N : A \bm{x} \leq \bm{b} \}$, and the feasible region of its linear programming relaxation is given by $\mathcal{X}_{LP} = \{ \bm{x} \in [0, 1]^N : A \bm{x} \leq \bm{b} \}$.

The objective is to minimize the range of the payoffs of selected columns. In particular, we assume that each column $i$ has an associated payoff value $p_i$. Without loss of generality, we assume nonnegative and bounded payoffs, i.e., for all $i$ we have $p_i \in [0, M]$ for some (large) constant $M$. For notational convenience, given $\bm{x}\in[0,1]^N$ and $\bm{p} \in [0, M]^N$, define $p_{\min}(\bm{x}) = \min_{i : x_i > 0} p_i$ and $p_{\max}(\bm{x}) = \max_{i : x_i > 0} p_i$ to be the minimum and maximum payoff of any (possibly fractional) solution $\bm{x}$, respectively. We use a definition that covers fractional solutions because these are frequently encountered during the course of branch and price. The goal of the range minimization problem is to determine a binary solution $\bm{x} \in \mathcal{X}$ minimizing the difference between the largest and smallest payoffs, i.e., minimizing $p_{\max}(\bm{x}) - p_{\min}(\bm{x})$. 

Finally, we assume that the large number of columns $N$ prohibits the out-of-the-box use of standard mixed-integer programming solvers, and that an exact branch-and-price algorithm is used to solve the problem. In particular, we assume that a problem-specific pricing algorithm and  a problem-specific branching scheme, i.e., set of branching rules, are available. The range branching rule we propose will operate \textit{on top of} the problem-specific branching scheme. In other words, the range branching rule is to be invoked first at any node in the branch-and-price tree, and the regular branching scheme is applied only if range branching does not detect any branching opportunities. Since the range branching decisions induce constraints involving the payoff value $p_i$, as well as constraints limiting the domain of $p_i$, we assume that these constraints are amenable to the pricing algorithm. We will show that this is the case for the vehicle routing and generalized assignment applications under consideration. 

\subsection{Mathematical Formulation}
\label{subsec:problem_formulation}

We now discuss a general class of formulations for the range minimization problem. To this end, we first introduce the concept of \emph{range-respecting solutions}.
\begin{definition}[Range-Respecting Solution] 
Let a vector $(\bar{\bm{x}}, \bar{\eta}, \bar{\gamma})\in[0,1]^N\times\bR^{2}$ be given. We say this vector is \emph{range-respecting} (with respect to $\bm{p}$) if $\bar{\eta} \geq p_{\max}(\bar{\bm{x}})$ and $\bar{\gamma} \leq p_{\min}(\bar{\bm{x}})$, and \emph{range-violating} (with respect to $\bm{p}$) otherwise.
\label{def:range_respecting}
\end{definition}
The range of a solution $\bar{\bm{x}}\in\mathcal{X}$ is equal to $\bar{\eta}-\bar{\gamma}$ with the smallest $\bar{\eta}$ and largest $\bar{\gamma}$ that make $(\bar{\bm{x}}, \bar{\eta}, \bar{\gamma})$ range-respecting. We are now ready to define a class of formulations for the range minimization problem.
\begin{definition}[Valid Formulation]
Let $A, \bm{b}$ be given. Let $\bm{w} = (C, \bm{d}, \bm{f}, \bm{g})$ be a tuple of constraint matrices and coefficient vectors, and consider the following mixed-binary linear program:
\begin{subequations}\label{range_min}
\begin{align}
\min_{\bm{x}, \eta, \gamma} \quad & \eta - \gamma \label{eq:obj} \\
\text{s.t.}\quad & A \bm{x} \leq \bm{b} \\ 
& C \bm{x} + \eta \bm{d}+\gamma \bm{f} \leq \bm{g} \label{eq:auxiliary_min_max} \\
& \bm{x} \in \{0, 1\}^N. \label{eq:domain}  
\end{align}
\end{subequations}
\label{def:range_problem}
Auxiliary constraints~(\ref{eq:auxiliary_min_max}), as defined by  $\bm{w}$, model the values of $\eta$ and $\gamma$ representing the maximum and minimum payoff. We say program~(\ref{range_min}) is a \emph{valid formulation} for the range minimization problem when $\bm{z} = (\bm{x}, \eta, \gamma)$ is feasible in~(\ref{range_min}) if and only if $\bm{x} \in \mathcal{X}$ and $\bm{z}$ is range-respecting. We say $\bm{w}$ is valid when it defines a valid formulation. We denote by $\mathcal{Z}_{LP}(\bm{w})$ the feasible region of the LP relaxation of~(\ref{range_min}), obtained by relaxing constraints~(\ref{eq:domain}) to $\bm{x} \in [0, 1]^N$.
\end{definition}
Without loss of generality, we can assume that, for any solution in the LP relaxation $\mathcal{X}_{LP}$ of feasible region $\mathcal{X}$, its range-respecting counterpart is contained in the LP relaxation of any valid formulation.
\begin{assumption}[Fractional Range-Respecting Solutions]
Let $\bm{w}$ be valid. Then, for any $\bm{x} \in \mathcal{X}_{LP}$, the range-respecting solution $\bm{z} = (\bm{x}, p_{\text{max}}(\bm{x}), p_{\text{min}}(\bm{x}))$ is contained in $\mathcal{Z}_{LP}(\bm{w})$.
\label{assumption:fractional}
\end{assumption}
Suppose, to the contrary, that the auxiliary constraints modeling the range tighten the description of the feasible region and cut off some fractional $\bm{x}$. Then, these auxiliary constraints could have been included as valid inequalities in the set $A \bm{x} \leq \bm{b}$.

Ideally, we wish to minimize over the set of range-respecting solutions only. Unfortunately, this goal is well out of reach. Since the operators $p_{\text{max}}$ and $p_{\text{min}}$ are non-convex and non-concave, any valid formulation of a nontrivial range minimization problem contains infinitely many range-violating fractional solutions, as demonstrated in our next result.
\begin{proposition}[Fractional Range-Violating Solutions]
Let $\mathcal{X}$ be such that there exist distinct $\bm{x}_1, \bm{x}_2 \in \mathcal{X}$ for which $p_{\text{max}}(\bm{x}_1) > p_{\text{max}}(\bm{x}_2)$ or $p_{\text{min}}(\bm{x}_1) > p_{\text{min}}(\bm{x}_2)$. Then, for any valid $\bm{w}$, it holds that $\mathcal{Z}_{LP}(\bm{w})$ contains infinitely many range-violating fractional solutions.
\end{proposition}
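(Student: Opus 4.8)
The plan is to exploit the convexity of the polyhedron $\mathcal{Z}_{LP}(\bm{w})$ together with the nonlinear behaviour of $p_{\max}$ and $p_{\min}$ along a segment connecting two feasible points. Concretely, I would take the two distinct points $\bm{x}_1, \bm{x}_2 \in \mathcal{X}$ furnished by the hypothesis and consider their \emph{tight} range-respecting counterparts $\bm{z}_1 = (\bm{x}_1, p_{\max}(\bm{x}_1), p_{\min}(\bm{x}_1))$ and $\bm{z}_2 = (\bm{x}_2, p_{\max}(\bm{x}_2), p_{\min}(\bm{x}_2))$. Since $\bm{x}_1, \bm{x}_2$ are binary feasible and both counterparts are range-respecting, validity of $\bm{w}$ guarantees $\bm{z}_1, \bm{z}_2 \in \mathcal{Z}_{LP}(\bm{w})$ (this is also the integral special case of Assumption~\ref{assumption:fractional}). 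Because $\mathcal{Z}_{LP}(\bm{w})$ is a polyhedron, the whole open segment $\bm{z}_\lambda := \lambda \bm{z}_1 + (1-\lambda)\bm{z}_2$ with $\lambda \in (0,1)$ then lies in $\mathcal{Z}_{LP}(\bm{w})$, and it will remain to show that each of these infinitely many points is fractional and range-violating.

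The heart of the argument is to compare the interpolated auxiliary coordinates against the true payoff extremes of $\bm{x}_\lambda := \lambda \bm{x}_1 + (1-\lambda)\bm{x}_2$. For $\lambda \in (0,1)$ the nonnegativity of the entries forces the support of $\bm{x}_\lambda$ to be exactly the union of the supports of $\bm{x}_1$ and $\bm{x}_2$, whence $p_{\max}(\bm{x}_\lambda) = \max\{p_{\max}(\bm{x}_1), p_{\max}(\bm{x}_2)\}$ and $p_{\min}(\bm{x}_\lambda) = \min\{p_{\min}(\bm{x}_1), p_{\min}(\bm{x}_2)\}$, while the auxiliary coordinates of $\bm{z}_\lambda$ are merely the convex combinations $\eta_\lambda = \lambda p_{\max}(\bm{x}_1) + (1-\lambda)p_{\max}(\bm{x}_2)$ and $\gamma_\lambda = \lambda p_{\min}(\bm{x}_1) + (1-\lambda)p_{\min}(\bm{x}_2)$. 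In the first case of the hypothesis, $p_{\max}(\bm{x}_1) > p_{\max}(\bm{x}_2)$, a one-line estimate gives $\eta_\lambda < p_{\max}(\bm{x}_1) = p_{\max}(\bm{x}_\lambda)$ for every $\lambda \in (0,1)$, so $\bm{z}_\lambda$ violates the $\eta$-side of the range-respecting condition; the case $p_{\min}(\bm{x}_1) > p_{\min}(\bm{x}_2)$ is symmetric and yields $\gamma_\lambda > p_{\min}(\bm{x}_\lambda)$. Either way every $\bm{z}_\lambda$ on the open segment is range-violating.

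I expect the single delicate step to be precisely this evaluation of $p_{\max}$ and $p_{\min}$ on $\bm{x}_\lambda$: the operators are neither convex nor concave, and the strict inequality arises exactly because the max/min are attained on the \emph{combined} support rather than matching the linear interpolation of the per-point extremes. One must therefore be careful to justify that passing to the union of supports is valid for $\lambda$ strictly between $0$ and $1$, and that at least one extreme strictly exceeds the corresponding convex combination under the hypothesis.

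Finally I would confirm that the segment yields genuinely fractional and mutually distinct solutions. Since $\bm{x}_1 \neq \bm{x}_2$, they differ in some coordinate $j$ where one value is $0$ and the other $1$, so the $j$-th entry of $\bm{x}_\lambda$ equals $\lambda$ or $1-\lambda$, lying strictly in $(0,1)$ and being a nonconstant affine function of $\lambda$. This simultaneously shows that each $\bm{x}_\lambda$ is fractional and that distinct values of $\lambda$ give distinct points $\bm{z}_\lambda$, so the open segment supplies infinitely many range-violating fractional solutions in $\mathcal{Z}_{LP}(\bm{w})$, completing the proof.
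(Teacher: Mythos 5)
Your proposal is correct and follows essentially the same argument as the paper: take the tight range-respecting counterparts $\bm{z}_1, \bm{z}_2$ of the two integral solutions, use convexity of $\mathcal{Z}_{LP}(\bm{w})$ to keep the open segment feasible, and observe that $p_{\max}$ (resp.\ $p_{\min}$) evaluated on the combined support strictly exceeds (resp.\ falls below) the linearly interpolated auxiliary coordinate. Your write-up is in fact slightly more explicit than the paper's, which asserts $p_{\max}(\bm{x}_\lambda) = p_{\max}(\bm{x}_1)$ ``by construction'' and does not spell out the support-union justification or the fractionality/distinctness check that you include.
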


\begin{proof}
Let a valid $\bm{w}$ be given, and suppose that $p_{\text{max}}(\bm{x}_1) > p_{\text{max}}(\bm{x}_2)$ for some $\bm{x}_1, \bm{x}_2 \in \mathcal{X}$. Since $\bm{w}$ is valid, we know that the range-respecting integral solutions $\bm{z}_1 = (\bm{x}_1, p_{\text{max}}(\bm{x}_1), p_{\text{min}}(\bm{x}_1))$ and $\bm{z}_2 = (\bm{x}_2, p_{\text{max}}(\bm{x}_2), p_{\text{min}}(\bm{x}_2))$ are contained in $\mathcal{Z}_{LP}(\bm{w})$. Now, for $\lambda \in (0, 1)$, define $\bm{z}(\lambda) = (\bm{x}(\lambda), \eta(\lambda), \gamma(\lambda)) = \lambda \bm{z}_1 + (1 - \lambda) \bm{z}_2$. By convexity of $\mathcal{Z}_{LP}(\bm{w})$, we know that $\bm{z}(\lambda) \in \mathcal{Z}_{LP}(\bm{w})$. By construction, $p_{\text{max}}(\bm{x}_\lambda) = p_{\text{max}}(\bm{x}_1) > \lambda p_{\text{max}}(\bm{x}_1)+(1-\lambda)p_{\text{max}}(\bm{x}_2)=\eta(\lambda)$ for $\lambda \in (0, 1)$. In other words, $\bm{z}(\lambda)\in \mathcal{Z}_{LP}(\bm{w})$ is range-violating for $\lambda \in (0, 1)$. The case $p_{\text{min}}(\bm{x}_1) > p_{\text{min}}(\bm{x}_2)$ is treated similarly.
\end{proof}
The admission of many range-violating fractional solutions can result in poor dual bounds, thereby hindering the performance of branch-and-price algorithms. As we will see, range branching provides a way of eliminating range-violating solutions from valid formulations.

While constructing a tractable valid formulation is highly problem-dependent, we like to make some general remarks on how to formulate constraints that correctly link $\bm{x}$ to $\eta$ and $\gamma$. First, constraints per individual column of the type $p_i x_i \leq \eta$ are valid for \textit{any} range minimization problem, but are highly impractical. The use of such constraints would require simultaneous column-and-row generation, severely complicating the pricing problem. Instead, one typically resorts to some form of aggregated constraints. A notable case arises when the set of columns can be written as some union of subsets, and exactly one column is selected from each subset in any feasible solution. This situation is frequently encountered in agent-based problems, where each agent is always assigned exactly one `job' (column). Formally, let $I$ denote the set of column indices and suppose that $I = \bigcup_{j \in [m]} I_j$ for some $m \in \mathbb{N}_{+}$, where the $I_j$ need not be pairwise disjoint and $[m] = \{1, \dots, m\}$ denotes the set of integers ranging from $1$ to $m$. If, for any feasible $\bm{x} \in \mathcal{X}$ and all $j \in [m]$, it holds that $\sum_{i \in I_j} x_i = 1$, the following constraints correctly model the maximum and the minimum:
\begin{subequations}
\begin{align}
& \sum_{i \in I_j} p_i x_i \leq \eta && \forall j \in [m] \\ 
& \sum_{i \in I_j} p_i x_i \geq \gamma && \forall j \in [m]. 
\end{align}
\label{eq:general_min_max}
\end{subequations}
In some applications, we might only be able to show that \textit{at most} one column is selected from each subset. If, for any feasible $\bm{x} \in \mathcal{X}$ and all $j \in [m]$, it holds that $\sum_{i \in I_j} x_i \leq 1$, we must replace constraints~(\ref{eq:general_min_max}) by the following, slightly weaker set of constraints:
\begin{subequations}
\begin{align}
\sum_{i \in I_j} p_i x_i \leq \eta && \forall j \in [m] \\ 
M + \sum_{i \in I_j} (p_i - M) x_i \geq \gamma && \forall j \in [m].
\end{align}
\label{eq:general_min_max_weak}
\end{subequations}
Recall that $M$ is an upper bound on the payoff of any column. All valid formulations presented in this work make use of some version of either constraints~(\ref{eq:general_min_max}) or~(\ref{eq:general_min_max_weak}). Of course, identifying sets $I_j$ requires problem-specific knowledge and might not always be possible. Finally, problem-specific valid inequalities are unlikely to be of use, as they are typically aimed at tightening the formulation of the problem-specific feasible region $\mathcal{X}$ and do not necessarily cut off fractional range-violating solutions. 

We finish the discussion on valid formulations with a small example, illustrating how a nontrivial gap between the optimal solution and the LP relaxation bound can exist even when $\mathcal{X}_{LP}$ equals the convex hull of $\mathcal{X}$. 
\begin{example}
Consider three columns $x_1, x_2$, and $x_3$ with payoffs $p_1 = 1$,  $p_2 =2$, and $p_3 = 3$, respectively. In any feasible solution, two out of three columns are selected, i.e., $\mathcal{X} = \{ (1, 1, 0), (1, 0, 1), (0, 1, 1)\}$. Clearly, this problem has two optimal solutions, $(1, 1, 0)$ and $(0, 1, 1)$, both of which have a range of one.

We now analyze the dual bound of a valid formulation for this problem. Inspired by constraints~(\ref{eq:general_min_max_weak}), we consider the following mixed-binary linear program:
\begin{subequations}
\begin{align}
\min_{\bm{x}, \eta, \gamma} \quad & \eta - \gamma \\
\text{s.t.}\quad & x_1 + x_2 + x_3 = 2 \\ 
& p_i x_i \leq \eta && \forall i \in [3] \\ 
& 3 + (p_i - 3) x_i \geq \gamma && \forall i \in [3] \\
& \eta \geq \gamma \\ 
& x_1, x_2, x_3 \in \{0, 1\} \\ 
& \eta, \gamma \in [0, 3].
\end{align}
\label{eq:example}
\end{subequations}
Note that we have bounded the domain of $\eta$ and $\gamma$ and added the valid inequality $\eta \geq \gamma$. 

Figure~\ref{fig:example} illustrates the feasible region of model~(\ref{eq:example}). As seen in Figure~\ref{subfig:example_x}, the formulation is a perfect formulation for $\mathcal{X}$, i.e., the set of $\bm{x} \in \mathcal{X}_{LP}$ are exactly those solution vectors in the convex hull of solutions in $\mathcal{X}$. Nonetheless, the set of feasible pairs of $(\eta, \gamma)$, as shown in Figure~\ref{subfig:example_range}, contain a solution with $\eta = \gamma$. In particular, solution $(x_1, x_2, x_3, \eta, \gamma) = (1/2, 1, 1/2, 2, 2)$ is feasible and has range zero. 
\def\nudge{.5}
\tikzset{line/.style={thick, black}}

\begin{figure}[tb!]
\centering
\begin{subfigure}[t]{0.48\textwidth}
    \centering
    \begin{tikzpicture}
    
    \draw[->, thick] (0,0)--(3.5,0) node[right]{$x_1$};
    \draw[->, thick] (0,0)--(0, 3.5) node[above]{$x_2$};

    \draw[thick] (0, -0.1) node[below]{$0$}--(0, 0.1);
    \draw[thick] (3, -0.1) node[below]{$1$}--(3, 0.1);

    \draw[thick] (-0.1, 0) node[left]{$0$}--(0.1, 0);
    \draw[thick] (-0.1, 3) node[left]{$1$}--(0.1, 3);

    \begin{scope}
    \clip (-\nudge ,-\nudge) rectangle (4+\nudge,4+\nudge);
    
    \draw[thin] (3, 0) -- (3, 3) -- (0,3) -- (3, 0);
    \clip (3, 0) -- (3, 3) -- (0,3) -- (3, 0);
    \fill[gray!15] (0,0) rectangle (4,4);
    \end{scope}

    \filldraw[black] (3, 3) circle (2pt); 
    \filldraw[black] (0, 3) circle (2pt); 
    \filldraw[black] (3, 0) circle (2pt); 
    
    \end{tikzpicture}
    \caption{Projection onto space of $(x_1, x_2)$.}
    \label{subfig:example_x}
\end{subfigure}
\hspace{0.25cm}
\begin{subfigure}[t]{0.48\textwidth}
    \centering
    \begin{tikzpicture}
    
    \draw[->, thick] (0,0)--(3.5,0) node[right]{$\eta$};
    \draw[->, thick] (0,0)--(0, 3.5) node[above]{$\gamma$};

    \draw[thick] (0, -0.1) node[below]{$0$}--(0, 0.1);
    \draw[thick] (1, -0.1) node[below]{$1$}--(1, 0.1);
    \draw[thick] (2, -0.1) node[below]{$2$}--(2, 0.1);
    \draw[thick] (3, -0.1) node[below]{$3$}--(3, 0.1);

    \draw[thick] (-0.1, 0) node[left]{$0$}--(0.1, 0);
    \draw[thick] (-0.1, 1) node[left]{$1$}--(0.1, 1);
    \draw[thick] (-0.1, 2) node[left]{$2$}--(0.1, 2);
    \draw[thick] (-0.1, 3) node[left]{$3$}--(0.1, 3);
    
    \begin{scope}
    \clip (-\nudge ,-\nudge) rectangle (4+\nudge,4+\nudge);
    
    \draw[thin, line] (1.2, 0) -- (1.2, 1) -- (1.5, 1.5) -- (2.14, 2.14) -- (3, 2.33) -- (3, 0);

    \draw[dashed, line] (0, 0) -- (3.5, 3.5) node[left]{$\eta = \gamma$};
    \clip (1.2, 0) -- (3, 0) -- (3, 2.33) -- (2.14, 2.14) -- (1.5, 1.5) -- (1.2, 1) -- (1.2, 0);
    \fill[gray!15] (0,0) rectangle (4,4);
    \end{scope}

    \filldraw[black] (3, 1) circle (2pt); 
    \filldraw[black] (3, 2) circle (2pt); 
    \filldraw[black] (2, 1) circle (2pt); 
    \filldraw[black] (3, 0) circle (2pt); 
    \filldraw[black] (2, 0) circle (2pt); 
    
    \end{tikzpicture}
    \caption{Projection onto space of ($\eta, \gamma)$.}
    \label{subfig:example_range}
\end{subfigure}

\caption{Feasible region in the space of $(x_1, x_2)$ (left) and $(\eta, \gamma)$ (right). Integer-feasible points are indicated by black dots, and  the feasible region of the LP relaxation is indicated in grey.}
\label{fig:example}
\end{figure}
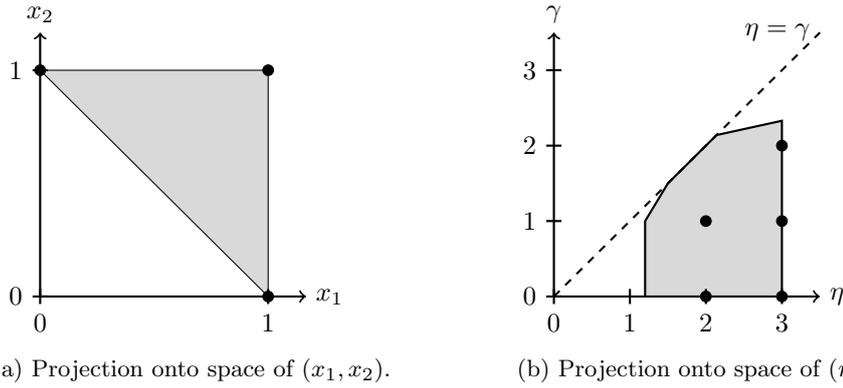

\end{example}

\section{Range Branching}
\label{sec:branching}

We propose \emph{range branching}, a generic branching strategy for range minimization problems. We define the range branching rule, to be used on top of problem-specific branching schemes, in Section~\ref{subsec:branching_rule}, and show some desirable properties of range branching in Section~\ref{subsec:branching_theory}. In Section~\ref{subsec:branching_practical}, we discuss some practical considerations regarding the implementation of range branching.

\subsection{Range Branching Rule}
\label{subsec:branching_rule}

The goal of range branching is to eliminate all range-violating fractional solutions from consideration and thereby improve the dual bound in branch-and-price algorithms for range minimization problems. To this end, we propose to (i) branch on the values of $\eta$ and $\gamma$ and (ii) make use of a combination of variable bounding and variable fixing.

We now formally define the range branching rule. Consider a fractional solution $(\bar{\bm{x}}, \bar{\eta}, \bar{\gamma})$ encountered at any node in the branch-and-price tree. Recall that $p_{\min}(\bar{\bm{x}}) = \min_{i : \bar{x}_i > 0} p_i$ and $p_{\max}(\bar{\bm{x}}) = \max_{i : \bar{x}_i > 0} p_i$. When this solution is range-respecting, i.e., $\bar\gamma\leq p_{\min}(\bar{\bm{x}})\leq p_{\max}(\bar{\bm{x}})\leq\bar\eta$, our branching rule does not create any branches. Instead, the problem-specific branching scheme is invoked to construct branches that cut-off the incumbent fractional solution. Otherwise, suppose that the incumbent fractional solution $(\bar{\bm{x}}, \bar{\eta}, \bar{\gamma})$ is range-violating, for example, $\bar{\eta} < p_{\max}(\bar{\bm{x}})$. We cut off this solution by branching on $\eta$. In particular, we select a cutoff value $U \in (\bar{\eta}, p_{\max}(\bar{\bm{x}}))$ and create two child nodes. In the left child node, we impose $\eta \leq U$. In addition, in the left child node and its descendants, we apply variable fixing by setting to zero all $x_i$ for which $p_i > U$ and prevent the generation of columns with payoffs $p_i > U$ in the pricing problem. In other words, we apply the locally valid inequality $\sum_{i : p_i > U} x_i = 0$ in the left node. In the right child node, we only impose $\eta \geq U$. We have experimented with enforcing the locally valid inequality $\sum_{i : p_i \geq U} x_i \geq 1$, but this did not improve the computational performance. The case $\bar{\gamma} > p_{\min}(\bar{\bm{x}})$ is treated similarly. This branching rule preserves all range-respecting solutions, but cuts off range-violating incumbent solutions. Note that range branching is to be applied at every node in the branch-and-price tree, even those where problem-specific branching decisions are already being enforced.

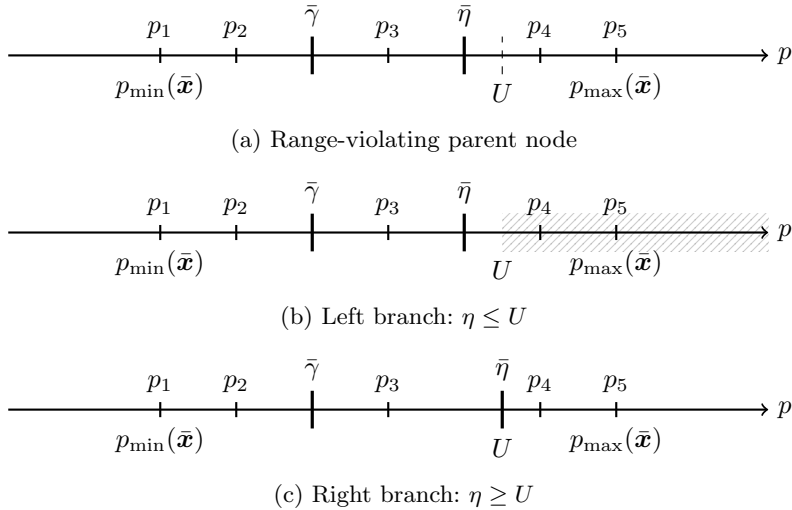
\begin{figure}[tb!]
\centering
\begin{subfigure}{\textwidth}
    \centering
    \begin{tikzpicture}
    \draw[->, thick] (-5,0)--(5,0) node[right]{$p$};

    \draw[very thick] (-1,-0.25)--(-1,0.25) node[above]{$\bar{\gamma}$};
    \draw[very thick] (1,-0.25)--(1,0.25) node[above]{$\bar{\eta}$};

    \draw[thick] (-3,-0.1) node[below]{$
    p_{\min}(\bar{\bm{x}})$} --(-3,0.1) node[above]{$p_1$};
    \draw[thick] (-2,-0.1)--(-2,0.1) node[above]{$p_2$};
    \draw[thick] (0,-0.1)--(0,0.1) node[above]{$p_3$};
    \draw[thick] (2,-0.1)--(2,0.1) node[above]{$p_4$};
    \draw[thick] (3,-0.1) node[below]{$
    p_{\max}(\bar{\bm{x}})$} --(3,0.1) node[above]{$p_5$};

    \draw[dashed] (1.5, -0.25) node[below]{$U$} --(1.5, 0.25);
    \end{tikzpicture}
    \caption{Range-violating parent node}
    \label{subfig:violating}
\end{subfigure}

\begin{subfigure}{\textwidth}
    \centering
    \begin{tikzpicture}
    \draw[->, thick] (-5,0)--(5,0) node[right]{$p$};

    \draw[very thick] (-1,-0.25)--(-1,0.25) node[above]{$\bar{\gamma}$};
    \draw[very thick] (1,-0.25)--(1,0.25) node[above]{$\bar{\eta}$};

    \draw[thick] (-3,-0.1) node[below]{$
    p_{\min}(\bar{\bm{x}})$} --(-3,0.1) node[above]{$p_1$};
    \draw[thick] (-2,-0.1)--(-2,0.1) node[above]{$p_2$};
    \draw[thick] (0,-0.1)--(0,0.1) node[above]{$p_3$};
    \draw[thick] (2,-0.1)--(2,0.1) node[above]{$p_4$};
    \draw[thick] (3,-0.1) node[below]{$
    p_{\max}(\bar{\bm{x}})$} --(3,0.1) node[above]{$p_5$};

    \fill[black, pattern=north east lines, opacity=.45] (1.5, -0.25) node[below, opacity=1]{$U$} rectangle (5, 0.25);
    \end{tikzpicture}
    \caption{Left branch: $\eta \leq U$}
    \label{subfig:left}
\end{subfigure}

\begin{subfigure}{\textwidth}
    \centering
   \begin{tikzpicture}
    \draw[->, thick] (-5,0)--(5,0) node[right]{$p$};

    \draw[very thick] (-1,-0.25)--(-1,0.25) node[above]{$\bar{\gamma}$};
    \draw[very thick] (1.5,-0.25) node[below]{$U$} --(1.5,0.25) node[above]{$\bar{\eta}$};

    \draw[thick] (-3,-0.1) node[below]{$
    p_{\min}(\bar{\bm{x}})$} --(-3,0.1) node[above]{$p_1$};
    \draw[thick] (-2,-0.1)--(-2,0.1) node[above]{$p_2$};
    \draw[thick] (0,-0.1)--(0,0.1) node[above]{$p_3$};
    \draw[thick] (2,-0.1)--(2,0.1) node[above]{$p_4$};
    \draw[thick] (3,-0.1) node[below]{$
    p_{\max}(\bar{\bm{x}})$} --(3,0.1) node[above]{$p_5$};
    \end{tikzpicture}
    \caption{Right branch: $\eta \geq U$}
    \label{subfig:right}
\end{subfigure}
\caption{Example of range branching applied to a range-violating solution. The striped rectangle indicates columns, whose payoffs are above the threshold, that have been fixed to zero.}
\label{fig:range_branching}
\end{figure}

Figure~\ref{fig:range_branching} illustrates the range branching rule with a simple example. Figure~\ref{subfig:violating} presents a feasible solution consisting of five columns with payoffs $p_1$ to $p_5$, respectively. This solution is range-violating since $p_4, p_5 > \bar{\eta}$, although a similar violation can be detected for $\bar{\gamma}$. As such, range branching is applied with cutoff value $U \in (\bar{\eta}, p_4)$. The left and right branches created through range branching are presented in Figures~\ref{subfig:left} and~\ref{subfig:right}, respectively. In the left branch, we impose $\eta \leq U$, and forbid the use of columns with payoffs above $U$, as indicated with the striped rectangle. In particular, columns with payoff $p_4$ and $p_5$ are fixed to zero. In the right branch, we impose $\eta \geq U$, ensuring that the objective value of this child node is strictly larger than that of its parent.

With proper choices of the cutoff values (see Section~\ref{subsec:branching_practical}), the use of this branching rule has several effects on the branch-and-price algorithm. First, we typically observe that the branch in which variable fixing is applied becomes infeasible and can be pruned, especially in early branching iterations. The branch where only variable bounding on $\eta$ and $\gamma$ is used encounters a higher objective value and thereby drives up the overall LP bound. Second, the variable fixing has an ambiguous effect on the efficiency of the pricing problems. On the one hand, incorporating the constraints $p_i \leq U$ or $p_i \geq L$ restricts the feasible region of the pricing problem, which can yield a speed-up. On the other hand, actually enforcing this constraint can complicate the pricing problems, e.g., by prohibiting the use of efficient dominance rules in labeling algorithms. Which effect dominates the other is highly specific to the problem and the payoff structure under consideration. 

\subsection{Properties of Range Branching}
\label{subsec:branching_theory}

We define a \emph{range-branching-free (RBF) node solution} to be any solution vector $\bm{z} = (\bar{\bm{x}}, \bar{\eta}, \bar{\gamma})$ encountered at any node in the branch-and-bound tree for which the range branching rule does not detect any branching opportunities. We refer to the corresponding node as \emph{RBF node}. We say $\bm{z}$ is a RBF node solution with respect to $\bm{w}$ when it is obtained by solving the range formulation defined by $\bm{w}$. By construction, all RBF node solutions are range-respecting. 

\begin{proposition}[RBF Node Solutions are Range-Respecting]
Let $\bm{z} = (\bar{\bm{x}}, \bar{\eta}, \bar{\gamma})$ be a RBF node solution with respect to some valid $\bm{w}$. Then, $\bm{z}$ is range-respecting.
\label{prop:leaf}
\end{proposition}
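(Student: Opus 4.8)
The plan is to prove the contrapositive: I would show that any \emph{range-violating} node solution $\bm{z} = (\bar{\bm{x}}, \bar{\eta}, \bar{\gamma})$ necessarily admits a range branching opportunity, so that it cannot be an RBF node solution. Taking the contrapositive then immediately yields that every RBF node solution is range-respecting. This matches the design principle behind the rule, namely that range branching is meant to fire on exactly the range-violating solutions.

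First I would unpack Definition~\ref{def:range_respecting}: the solution $\bm{z}$ is range-violating precisely when $\bar{\eta} < p_{\max}(\bar{\bm{x}})$ or $\bar{\gamma} > p_{\min}(\bar{\bm{x}})$. These two cases are symmetric, so it suffices to treat the first. Suppose $\bar{\eta} < p_{\max}(\bar{\bm{x}})$. Then the open interval $(\bar{\eta}, p_{\max}(\bar{\bm{x}}))$ is nonempty, so the range branching rule of Section~\ref{subsec:branching_rule} can select a cutoff value $U$ in this interval and split the current node into the two children imposing $\eta \leq U$ and $\eta \geq U$ respectively. Hence a branching opportunity is detected, contradicting the assumption that $\bm{z}$ is an RBF node solution. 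The case $\bar{\gamma} > p_{\min}(\bar{\bm{x}})$ is handled identically, branching on $\gamma$ with a cutoff drawn from the nonempty interval $(p_{\min}(\bar{\bm{x}}), \bar{\gamma})$. Since the dichotomy ``range-respecting, or else branch on $\eta$ or $\gamma$'' is exhaustive, an RBF node solution must fall into the first category.

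The only point that requires care is the well-definedness of $p_{\min}(\bar{\bm{x}})$ and $p_{\max}(\bar{\bm{x}})$, which are taken over the support $\{ i : \bar{x}_i > 0 \}$. Here I would invoke the hypothesis that $\bm{z}$ is obtained by solving the formulation defined by a \emph{valid} $\bm{w}$: this guarantees that $\bm{z} \in \mathcal{Z}_{LP}(\bm{w})$ is a genuine (possibly fractional) solution of program~(\ref{range_min}), so that $\bar{\bm{x}}$ has nonempty support and the minimum and maximum payoffs over the selected columns are meaningful quantities to compare against $\bar{\gamma}$ and $\bar{\eta}$.

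I do not expect a substantive obstacle: the statement is essentially a reformulation of the definition of the branching rule, so the proof is short. The single mild point to verify, which I would flag as the ``hard'' part only in the sense that it is the only non-automatic step, is that a valid cutoff always exists whenever a violation is present; this reduces precisely to the nonemptiness of the relevant open interval established above.
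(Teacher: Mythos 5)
Your proposal is correct and is essentially the paper's own proof: the paper also argues by contradiction that a range-violating solution would trigger range branching, contradicting the RBF property. Your additional remarks (nonemptiness of the cutoff interval and well-definedness of $p_{\min}$, $p_{\max}$) merely spell out details the paper leaves implicit.
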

\begin{proof}
Suppose that $\bm{z}$ is not range-respecting. Then, range branching can be invoked. This contradicts the fact that the node is RBF. Hence, $\bm{z}$ must be range-respecting.
\end{proof}
It follows that any RBF node solution is feasible in the LP relaxation of any valid formulation for the range minimization problem.
\begin{corollary}[Formulation-Independence]
Let $\bm{z} = (\bar{\bm{x}}, \bar{\eta}, \bar{\gamma})$ be a RBF node solution with respect to some valid $\bm{w}$. Then, $\bm{z} \in \mathcal{Z}_{LP}(\bm{w}')$ for all valid $\bm{w}'$ that satisfy Assumption~\ref{assumption:fractional}. 
\label{cor:auxiliary}
\end{corollary}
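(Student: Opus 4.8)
The plan is to prove Corollary~\ref{cor:auxiliary} as a direct consequence of Proposition~\ref{prop:leaf} combined with Assumption~\ref{assumption:fractional}. Let $\bm{z} = (\bar{\bm{x}}, \bar{\eta}, \bar{\gamma})$ be a RBF node solution with respect to some valid $\bm{w}$, and let $\bm{w}'$ be any valid tuple satisfying Assumption~\ref{assumption:fractional}. First I would invoke Proposition~\ref{prop:leaf} to conclude that $\bm{z}$ is range-respecting, meaning $\bar{\gamma} \leq p_{\min}(\bar{\bm{x}})$ and $\bar{\eta} \geq p_{\max}(\bar{\bm{x}})$. Since $\bm{z}$ is obtained by solving a valid formulation, its $\bm{x}$-component satisfies $A \bar{\bm{x}} \leq \bm{b}$ with $\bar{\bm{x}} \in [0,1]^N$, so that $\bar{\bm{x}} \in \mathcal{X}_{LP}$.

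The key intermediate step is to move from $\bm{z}$ to the canonical range-respecting solution $\bm{z}^\ast = (\bar{\bm{x}}, p_{\max}(\bar{\bm{x}}), p_{\min}(\bar{\bm{x}}))$. By Assumption~\ref{assumption:fractional} applied to $\bm{w}'$, we know $\bm{z}^\ast \in \mathcal{Z}_{LP}(\bm{w}')$ precisely because $\bar{\bm{x}} \in \mathcal{X}_{LP}$. What remains is to argue that the membership of $\bm{z}^\ast$ in $\mathcal{Z}_{LP}(\bm{w}')$ carries over to $\bm{z}$ itself, even though $\bm{z}$ generally has $\bar{\eta} > p_{\max}(\bar{\bm{x}})$ or $\bar{\gamma} < p_{\min}(\bar{\bm{x}})$. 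For this I would exploit the structure of the auxiliary constraints~(\ref{eq:auxiliary_min_max}): in a valid formulation, enlarging $\eta$ above $p_{\max}(\bar{\bm{x}})$ or shrinking $\gamma$ below $p_{\min}(\bar{\bm{x}})$ must keep the point feasible, since the formulation is valid for \emph{every} range-respecting pair $(\eta,\gamma)$ with $\eta \geq p_{\max}(\bar{\bm{x}})$ and $\gamma \leq p_{\min}(\bar{\bm{x}})$, not merely the tightest one. Concretely, the definition of valid formulation guarantees that any range-respecting $(\bar{\bm{x}}, \eta, \gamma)$ with $\bar{\bm{x}}$ integral is feasible; the monotone direction of the inequalities (with $\eta$ pushing maxima up and $\gamma$ pushing minima down) means feasibility is preserved under such slackening in the LP relaxation as well.

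The main obstacle I anticipate is making the last monotonicity argument fully rigorous at the level of fractional $\bar{\bm{x}}$, since the definition of valid formulation only pins down feasibility for integral $\bm{x}$ together with range-respecting $(\eta,\gamma)$, while Assumption~\ref{assumption:fractional} supplies only the specific point $\bm{z}^\ast$ for fractional $\bar{\bm{x}}$. To bridge this gap I would argue directly from the sign pattern of the coefficient vectors $\bm{d}$ and $\bm{f}$ in~(\ref{eq:auxiliary_min_max}): because increasing $\eta$ can only make $\eta \bm{d}$ more satisfiable in the rows that model the maximum (and symmetrically for $\gamma \bm{f}$ and the minimum), any constraint satisfied at $\bm{z}^\ast$ remains satisfied at $\bm{z}$, where $\bar{\eta} \geq p_{\max}(\bar{\bm{x}})$ and $\bar{\gamma} \leq p_{\min}(\bar{\bm{x}})$. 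Once this componentwise inequality check is in place, I would conclude that $\bm{z}$ inherits feasibility from $\bm{z}^\ast$, hence $\bm{z} \in \mathcal{Z}_{LP}(\bm{w}')$ for every valid $\bm{w}'$ satisfying Assumption~\ref{assumption:fractional}, completing the proof.
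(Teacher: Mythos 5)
Your proposal is correct, and at the top level it follows the same reduction as the paper: use Proposition~\ref{prop:leaf} to conclude $\bm{z}$ is range-respecting, use validity of $\bm{w}$ to get $\bar{\bm{x}} \in \mathcal{X}_{LP}$, then appeal to Assumption~\ref{assumption:fractional} for $\bm{w}'$. The difference lies in the last step, and it is a substantive one. The paper's proof simply asserts that Definition~\ref{def:range_problem} and Assumption~\ref{assumption:fractional} together imply that $\mathcal{Z}_{LP}(\bm{w}')$ admits \emph{all} range-respecting solutions whose $\bm{x}$-part lies in $\mathcal{X}_{LP}$, even though the assumption literally supplies only the canonical point $\bm{z}^{\ast} = (\bar{\bm{x}}, p_{\max}(\bar{\bm{x}}), p_{\min}(\bar{\bm{x}}))$. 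You identify exactly this gap and close it with a monotonicity argument: validity of $\bm{w}'$ at integral points forces the whole quadrant $\{\eta \geq p_{\max}(\bm{x}),\ \gamma \leq p_{\min}(\bm{x})\}$ to be feasible for each $\bm{x} \in \mathcal{X}$, and since that quadrant is unbounded (let $\eta \to \infty$ and $\gamma \to -\infty$ at any fixed feasible integral point) this pins down the sign pattern $\bm{d} \leq \bm{0}$, $\bm{f} \geq \bm{0}$ in~(\ref{eq:auxiliary_min_max}); componentwise monotonicity then transfers feasibility from $\bm{z}^{\ast}$ to $\bm{z}$, which has larger $\eta$ and smaller $\gamma$. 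What your route buys is rigor: it shows the corollary is not an immediate restatement of Assumption~\ref{assumption:fractional} but a consequence of the linear structure of valid formulations, which the paper treats as self-evident. Two refinements would make it airtight: first, your key claim that ``increasing $\eta$ can only make $\eta\bm{d}$ more satisfiable'' is asserted rather than derived --- it should be obtained from the limit argument above, since Definition~\ref{def:range_problem} places no a priori sign restriction on $\bm{d}$ or $\bm{f}$; second, that derivation needs $\mathcal{X} \neq \emptyset$ (if $\mathcal{X} = \emptyset$, validity is vacuous, the sign pattern need not hold, and one can in fact construct degenerate counterexamples to the corollary itself --- an implicit nonemptiness assumption that the paper's own proof also relies on without saying so).
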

\begin{proof}
Let $\bm{w}'$ be valid. By Definition~\ref{def:range_problem} and Assumption~\ref{assumption:fractional}, the LP-relaxation of the formulation defined by $\bm{w}'$ admits all range-respecting $\bm{z} = (\bm{x}, \eta, \gamma)$ for which $\bm{x} \in \mathcal{X}_{LP}$. Solution $\bm{z}$ is range-respecting by Proposition~\ref{prop:leaf}, and $\bm{x} \in \mathcal{X}_{LP}$ holds since $\bm{w}$ is valid. Hence, $\bm{z} \in \mathcal{Z}_{LP}(\bm{w}')$.
\end{proof}
As a result, the objective value at any RBF node is at least as high as the best possible root node bound of any valid formulation that admits all fractional range-respecting solutions for range minimization.

\begin{proposition}
Let $\bm{z} = (\bar{\bm{x}}, \bar{\eta}, \bar{\gamma})$ be a RBF node solution with respect to some valid $\bm{w}$, and denote its objective value by $v_{RBF} = \bar{\eta} - \bar{\gamma}$. Moreover, let $v^{*}_{LP}(\bm{u}) = \min_{(\bm{x}, \eta, \gamma) \in \mathcal{Z}_{LP}(\bm{u})} (\eta - \gamma)$ denote the LP bound of the valid formulation defined by $\bm{u}$. Then $v_{RBF} \geq v^{*}_{LP}(\bm{w}')$ for all valid $\bm{w}'$ that satisfy Assumption~\ref{assumption:fractional}.
\end{proposition}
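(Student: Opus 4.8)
The plan is to reduce this statement to an immediate consequence of Corollary~\ref{cor:auxiliary} together with the definition of $v^{*}_{LP}$ as a minimum over the LP-relaxation feasible region. The key observation is that the quantity $v_{RBF} = \bar{\eta} - \bar{\gamma}$ is simply the objective value attained by a particular point $\bm{z}$, whereas $v^{*}_{LP}(\bm{w}')$ is the \emph{smallest} objective value over all of $\mathcal{Z}_{LP}(\bm{w}')$. So as soon as I can certify that $\bm{z}$ is a feasible point of $\mathcal{Z}_{LP}(\bm{w}')$, the inequality is forced.

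Concretely, I would proceed in two short steps. First, fix an arbitrary valid $\bm{w}'$ satisfying Assumption~\ref{assumption:fractional}, and invoke Corollary~\ref{cor:auxiliary} to conclude that $\bm{z} = (\bar{\bm{x}}, \bar{\eta}, \bar{\gamma}) \in \mathcal{Z}_{LP}(\bm{w}')$. This is the only nontrivial input, and it has already been established: it rests on the fact that $\bm{z}$ is range-respecting (Proposition~\ref{prop:leaf}) and that $\bar{\bm{x}} \in \mathcal{X}_{LP}$ (validity of $\bm{w}$), so that Assumption~\ref{assumption:fractional} places the range-respecting completion of $\bar{\bm{x}}$ inside $\mathcal{Z}_{LP}(\bm{w}')$. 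Second, since $\bm{z}$ is a feasible point of the minimization defining $v^{*}_{LP}(\bm{w}')$, its objective value cannot be smaller than the minimum:
\begin{equation*}
v_{RBF} = \bar{\eta} - \bar{\gamma} \;\geq\; \min_{(\bm{x}, \eta, \gamma) \in \mathcal{Z}_{LP}(\bm{w}')} (\eta - \gamma) = v^{*}_{LP}(\bm{w}').
\end{equation*}
Because $\bm{w}'$ was arbitrary among the valid formulations meeting Assumption~\ref{assumption:fractional}, the bound holds uniformly over all of them.

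I do not anticipate a genuine obstacle here, since the statement is essentially a restatement of Corollary~\ref{cor:auxiliary} phrased at the level of objective values rather than feasibility. The only point worth a moment's care is ensuring that the minimum defining $v^{*}_{LP}(\bm{w}')$ is actually attained (or, if not, interpreting it as an infimum), so that the comparison of a feasible objective value against the optimal LP value is well posed; this is unproblematic because $\mathcal{Z}_{LP}(\bm{w}')$ is a nonempty polyhedron and $\eta - \gamma$ is a linear function bounded below on it (for instance by the objective value of any integer-feasible point), so the minimum exists. With that remark in place, the proof is a one-line application of feasibility plus optimality.
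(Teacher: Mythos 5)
Your proof is correct and is essentially identical to the paper's: the paper's entire proof reads ``The result directly follows from Corollary~\ref{cor:auxiliary}'', which is precisely your two-step argument (feasibility of $\bm{z}$ in $\mathcal{Z}_{LP}(\bm{w}')$ via the corollary, then comparing a feasible point's objective value to the minimum). One minor quibble: your side remark that $\eta - \gamma$ is bounded below on $\mathcal{Z}_{LP}(\bm{w}')$ ``by the objective value of any integer-feasible point'' is backwards, since feasible points only \emph{upper}-bound a minimum; this is immaterial, however, because even if $v^{*}_{LP}(\bm{w}')$ were $-\infty$ the claimed inequality would hold trivially.
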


\begin{proof}
The result directly follows from Corollary~\ref{cor:auxiliary}.
\end{proof}

Recall that the LP lower bound in branch and price is defined as the minimum objective over all leaf nodes in the branch-and-bound tree. At some point in the course of the branch-and-price algorithm, all leaf nodes in the branch-and-bound tree will be RBF nodes or descendants of RBF nodes. From this point onward, the LP lower bound will be at least as high as the best possible root node bound of any valid formulation. The rationale behind range branching is that this tighter dual bound will reduce the number of nodes to explore in the branch-and-price tree and therefore result in an overall reduction in computing time.

\subsection{Practical Considerations}
\label{subsec:branching_practical}

\paragraph{Cutoff Values}
As discussed in Section~\ref{subsec:branching_rule}, the success of range branching can be partially explained by its differential effect on the child branches it creates. In one branch, we apply both variable bounding and fixing, after which we can typically prune by infeasibility. In the other branch, we apply variable bounding, thereby driving up the LP bound of this branch. These effects are likely to be small, however, when cutoff values close to $\bar{\gamma}$ and $\bar{\eta}$ are used.

In practice, we find that the following relaxation of range branching works well. Let $\alpha \in [0, 1)$. At a node with incumbent solution $(\bar{\bm{x}}, \bar{\eta}, \bar{\gamma})$, set $U_{\alpha} = (1+\alpha)\bar{\eta}$ and $L_{\alpha} = (1-\alpha)\bar{\gamma}$. We check whether columns with payoff above $U_{\alpha}$ or below $L_{\alpha}$ are used, i.e., when range violations of more than a factor $\alpha$ occur. In this case, we perform range branching with $U_{\alpha}$ and $L_{\alpha}$ as cutoff values. A value of $\alpha=0.025$ seems to perform well in our experiments. The results in Section~\ref{subsec:branching_theory} hold only for the case of $\alpha=0$, since only then all range-violating solutions are removed through range branching. From a computational point of view, however, it is interesting to consider layered branching schemes with decreasing values of $\alpha$.

\paragraph{Further Improvements}
The default range branching scheme can be refined in several ways. First, it is clear that the branching scheme can be tightened when all $p_i$ are integer-valued (e.g., branch by creating nodes $\eta\leq U$ and $\eta\geq U+1$ with integer-valued $U$). Second, one could incorporate a restricted master heuristic throughout the branch-and-price algorithm and use valid upper bounds on the range to locally restrict the domain of $\eta$ and $\gamma$. Third, if one uses big-$M$ constraints to model the minimum, as in~(\ref{eq:general_min_max_weak}), this value of $M$ can be tightened throughout the course of the algorithm based on the variable domain of $\gamma$. Fourth, our theoretical analysis suggests that it is possible to solve range minimization problems through range branching without the use of auxiliary constraints of the type~(\ref{eq:auxiliary_min_max}), i.e., without an explicit valid formulation.

\section{Fair Capacitated Vehicle Routing Problem}
\label{sec:cvrp}

In this section, we introduce an application of the range minimization problem which we call the \emph{fair capacitated vehicle routing problem (F-CVRP)}, a fairness-oriented version of the capacitated vehicle routing problem.

\begin{definition}[Fair Capacitated Vehicle Routing Problem]
Let $C$ be a set of customers, where customer $i \in C$ has integer demand $d_i >0$. Let $G=(\{0\} \cup C, A)$ be the complete directed graph on the set of customers and a central depot, and let $c_a$ and $p_a$ denote the cost and distance of traversing arc $a \in A$, respectively. Moreover, let $K$ homogeneous vehicles with capacity $Q$ be given, and let $B$ be an upper bound on the total cost of all routes. A set of $K$ routes is feasible when each route starts and ends at the depot, each customer is visited by exactly one route, the sum of customer demands along each route does not exceed $Q$, and the total cost of traversed arcs does not exceed $B$. The goal of the fair capacitated vehicle routing problem is to select a feasible set of routes that minimizes the range of the route distances.
\label{def:fcvrp}
\end{definition}
We include an upper bound on the overall route cost to ensure that solutions do not deteriorate too much compared to the cost-efficient solution. Without such a bound, the length of the shortest route can be artificially lengthened to increase the minimum and thereby reduce the range. In practice, a tight efficiency bound leaves little room for such routes, but there is no guarantee that selected routes are TSP-optimal: shorter routes visiting the same set of customers in different order might exist. 

We present two mathematical formulations of F-CVRP in Section~\ref{subsec:cvrp_math} and present a branch-and-price algorithm with range branching in Section~\ref{subsec:cvrp_branch}. We describe the set-up of our experiments in Section~\ref{subsec:cvrp_setup} and discuss the results in Section~\ref{subsec:cvrp_results}. We empirically study the computational cost of enforcing TSP-optimality in the F-CVRP, and analyze whether our method can effectively solve this problem variant in Section~\ref{subsec:cvrp_tsp}.

\subsection{Mathematical Formulation}
\label{subsec:cvrp_math}

We present two formulations for the F-CVRP. The vehicle-index formulation explicitly assigns routes to vehicles. The last-customer formulation does not require a vehicle index, but relies on the observations that (i) each route has a unique last customer and (ii) no customer can appear as last customer on more than one route.\footnote[2]{This formulation builds on the last-customer formulation for the min-max multiple traveling salesman, as presented by N. Bianchessi, C. Tilk, and S. Irnich at the 2023 International Workshop on Column Generation in Montréal.} We observe in preliminary experiments that the direct branch and price implementation of both formulations suffers from poor LP relaxations.

\paragraph{Vehicle-Index Formulation}
Let $R$ denote the index set of feasible routes. For each $r \in R$, let $c_r = \sum_{(i,j) \in r}c_{ij}$ and $p_r = \sum_{(i,j) \in r}p_{ij}$ denote the cost and the distance of route $r$, respectively, and let binary parameter $a_{ir}$ indicate whether customer $i$ is visited by route $r$. We introduce the binary decision variable $x_{rk}$, indicating whether or not route $r$ is assigned to vehicle $k$. Then, the following mixed-binary linear program is a valid formulation of F-CVRP:
\begin{subequations}
\begin{align}
\min \quad & \eta - \gamma \\
\text{s.t.} \quad  & \sum_{k \in [K]} \sum_{r \in R} a_{ir} x_{rk} = 1 && \forall i \in C \label{eq:fcvrp_vehicle_cover} \\
& \sum_{k \in [K]} \sum_{r \in R} c_r x_{rk} \leq B \\
& \sum_{r \in R} x_{rk} = 1 && \forall k \in [K] \\
& \sum_{r \in R} p_r x_{rk} \leq \eta && \forall k \in [K] \label{eq:fcvrp_vehicle_max} \\
& \sum_{r \in R} p_r x_{rk} \geq \gamma && \forall k \in [K] \label{eq:fcvrp_vehicle_min} \\
& x_{rk} \in \{0, 1\} && \forall k \in [K],~r \in R.
\end{align}
\label{eq:vehicle_index}
\end{subequations}
Here, constraints~(\ref{eq:fcvrp_vehicle_max})-(\ref{eq:fcvrp_vehicle_min}) are essentially special cases of constraints~(\ref{eq:general_min_max}), based on the fact that each vehicle always performs exactly one route. Due to the symmetry of the formulation in index $k$, this formulation always admits a fractional solution with objective value 0 (by setting $x_{r1} =x_{r2} =\ldots=x_{rK}$ for all $r \in R$). There exist techniques that partially break the symmetry among vehicles, but completely eliminating this symmetry in branch and price remains challenging \cite{darvish2020comparison}. 

\paragraph{Last-Customer Formulation}
We use the same parameters as in the vehicle-index formulation. In addition, for each route $r \in R$, we let the binary parameter $b_{ir}$ indicate whether customer $i$ is the last customer on route $r$. We introduce the binary decision variable $x_r$, indicating whether or not route $r$ is selected. Then, the following mixed-binary linear program is a valid formulation of F-CVRP:
\begin{subequations}
\begin{align}
\min \quad & \eta - \gamma \\
\text{s.t.} \quad  & \sum_{r \in R} a_{ir} x_{r}  = 1 && \forall i \in C \\
& \sum_{r \in R} c_r x_{r} \leq B \\
& \sum_{r \in R} x_{r} = K \\
& \sum_{r \in R} p_r b_{ir} x_r \leq \eta && \forall i \in C \label{eq:fcvrp_customer_max} \\
& M \left(1 - \sum_{r \in R} b_{ir} x_r \right) + \sum_{r \in R} p_r b_{ir} x_r \geq \gamma && \forall i \in C \label{eq:fcvrp_customer_min} \\
& x_{r} \in \{0, 1\} && \forall r \in R.
\end{align}
\label{eq:customer_formulation}
\end{subequations}
Constraints~(\ref{eq:fcvrp_customer_max})-(\ref{eq:fcvrp_customer_min}) are similar in spirit to constraints~(\ref{eq:general_min_max_weak}), based on the observation that each customer can appear as the last customer on at most one route. Big-$M$ constraints~(\ref{eq:fcvrp_customer_min}) in particular contribute to poor LP bounds. In addition, fractional solutions with low objective value can be obtained by selecting two copies of each route with equal value: one in forward orientation, and one in backward orientation (visiting all customers in reversed order). This issue, however, can be largely mitigated by enforcing that the index of the last customer exceeds that of the first customer within the pricing problem.

It is readily seen that both formulations are valid.
\begin{proposition}[Valid F-CVRP Formulations]
Formulations (\ref{eq:vehicle_index}) and (\ref{eq:customer_formulation}) are valid formulations of F-CVRP.
\end{proposition}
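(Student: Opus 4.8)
The plan is to verify, for each formulation, the two-directional characterization of validity from Definition~\ref{def:range_problem}: a point $\bm{z} = (\bm{x}, \eta, \gamma)$ with binary $\bm{x}$ is feasible if and only if $\bm{x}$ encodes a feasible set of $K$ routes (i.e.\ $\bm{x} \in \mathcal{X}$ in the sense of Definition~\ref{def:fcvrp}) and $\bm{z}$ is range-respecting in the sense of Definition~\ref{def:range_respecting}. Since $\bm{x}$ is binary in both cases, the argument splits cleanly into two independent checks: first, that the routing constraints (customer covering, the cost budget $B$, and the cardinality constraint fixing the number of routes) carve out exactly $\mathcal{X}$; and second, that the auxiliary constraints linking $\bm{x}$ to $\eta$ and $\gamma$ hold precisely when $\eta \geq p_{\max}(\bm{x})$ and $\gamma \leq p_{\min}(\bm{x})$. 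The first check is immediate, because $R$ is defined to contain only feasible routes (those respecting depot start/end and the capacity $Q$), so the remaining constraints directly transcribe the conditions of Definition~\ref{def:fcvrp}.

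For the vehicle-index formulation~(\ref{eq:vehicle_index}), I would use the constraint $\sum_{r \in R} x_{rk} = 1$, which together with $x_{rk} \in \{0,1\}$ forces each vehicle $k$ to perform exactly one route $r(k)$. Hence $\sum_{r \in R} p_r x_{rk} = p_{r(k)}$ is exactly the distance of that route, and the selected columns have payoffs $\{p_{r(k)} : k \in [K]\}$. Constraints~(\ref{eq:fcvrp_vehicle_max}) then read $\eta \geq p_{r(k)}$ for every $k$, i.e.\ $\eta \geq \max_k p_{r(k)} = p_{\max}(\bm{x})$, and symmetrically~(\ref{eq:fcvrp_vehicle_min}) gives $\gamma \leq p_{\min}(\bm{x})$. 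This is exactly the range-respecting condition, establishing both directions at once; the case is essentially an instantiation of the general constraints~(\ref{eq:general_min_max}).

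For the last-customer formulation~(\ref{eq:customer_formulation}), I would first record two structural facts: (i) each route has a unique last customer, so for each selected $r$ there is exactly one $i$ with $b_{ir}=1$; and (ii) since the covering constraints force each customer onto exactly one selected route, no customer can be the last customer of more than one selected route. Fixing a customer $i$, the sum $\sum_{r \in R} b_{ir} x_r$ is therefore either $0$ or $1$. When it equals $1$, witnessed by the selected route $r$ ending at $i$, constraint~(\ref{eq:fcvrp_customer_max}) reads $p_r \leq \eta$ and constraint~(\ref{eq:fcvrp_customer_min}) reduces to $p_r \geq \gamma$; when it equals $0$, constraint~(\ref{eq:fcvrp_customer_max}) is vacuous and~(\ref{eq:fcvrp_customer_min}) becomes $M \geq \gamma$, which holds trivially since $M$ bounds all payoffs. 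Because, by fact (i), ranging $i$ over $C$ enumerates precisely the last customers of all $K$ selected routes, these per-customer inequalities collectively assert $\eta \geq p_{\max}(\bm{x})$ and $\gamma \leq p_{\min}(\bm{x})$, and conversely any range-respecting $(\eta,\gamma)$ satisfies them.

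The main obstacle I anticipate is the big-$M$ minimum constraint~(\ref{eq:fcvrp_customer_min}): one must argue carefully that it is genuinely vacuous for customers that are not the last customer of any selected route (relying on $M$ being a valid payoff upper bound), while remaining tight exactly for those that are. The supporting fact (ii) — that no customer is the last customer of two distinct selected routes — is where the interaction between the covering constraints and the last-customer encoding must be made explicit; everything else is routine transcription of Definition~\ref{def:fcvrp} and Definition~\ref{def:range_respecting}.
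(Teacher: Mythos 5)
Your proof is correct: the paper itself gives no proof of this proposition (it is prefaced only with ``It is readily seen that both formulations are valid''), and your argument is exactly the routine verification being alluded to --- checking the biconditional of Definition~\ref{def:range_problem} directly for binary $\bm{x}$. The one point of substance, that the covering constraints prevent a customer from being the last customer of two distinct selected routes (so $\sum_{r} b_{ir} x_r \in \{0,1\}$ and the big-$M$ constraint~(\ref{eq:fcvrp_customer_min}) is tight exactly for last customers of selected routes), is handled correctly in your write-up.
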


\subsection{Branch and Price}
\label{subsec:cvrp_branch}

In the following, we present a branch-and-price algorithm for solving the vehicle-index formulation, pointing out differences with the last-customer formulation whenever applicable. We refer to \cite{costa2019exact} for a detailed introduction to branch and price for vehicle routing problems.

\paragraph{Pricing Problem}
Let $\bm{\kappa}, \lambda, \bm{\mu}, \bm{\nu}$, and $\bm{\pi}$ denote the dual variables of constraints (\ref{eq:fcvrp_vehicle_cover})-(\ref{eq:fcvrp_vehicle_min}), respectively, of the vehicle-index formulation \eqref{eq:vehicle_index}. The reduced cost of route $r$ of vehicle $k$ is then given by
\begin{subequations}
\begin{align}
   RC(x_{rk}) =& -\mu_k -\sum_{i \in C} \kappa_{i} a_{ir} - \lambda c_r - (\nu_k + \pi_k) p_r \\
   =&-\mu_k - \sum_{(i, j) \in r} (\kappa_{j} + \lambda c_{ij} + (\nu_k + \pi_k) p_{ij}).
\end{align}
\end{subequations}
The pricing problem consists of finding a feasible route with negative reduced cost. The reduced cost can be easily decomposed on the arcs of $G$, and hence the pricing problem can be formulated as a series of elementary shortest path problems with resource constraints. There is a single pricing problem per vehicle, and resource constraints model the vehicle capacity constraint. We can partly break symmetry by enforcing that customer $i$ is served by a vehicle with index $k \leq i$, i.e., by removing customer $i$ from all pricing problems  where $k > i$. In the last-customer formulation, there is a single pricing problem per customer. 

\paragraph{Pricing Algorithm}

As typically done in branch-and-price algorithms for vehicle routing problems, we solve the pricing problem using a labeling algorithm \cite{costa2019exact}. Each label represents a partial path from the depot. The algorithm is initialized with a single dummy label at the depot. For each remaining label, its partial path is extended in all possible directions. For each extended label, we check whether it corresponds to a potentially feasible route completion. To reduce the number of labels, we check whether an extended label is dominated by some existing label, or whether it dominates an existing label. Dominated labels are removed from consideration. Labels returning to the depot with negative reduced cost are stored in memory. We now discuss the different components of the labeling algorithm in more detail.

A label $L = (v(L), c(L), q(L), \Pi(L))$ is a tuple representing a partial path from the depot to customer $v(L)$, with reduced cost $c(L)$, load $q(L)$, and set of previously visited customers $\Pi(L)$. In practice, each label also stores its predecessor to enable backtracking. It is feasible to extend label $L$ along arc $(v(L), j)$ if $j \notin \Pi(L)$ and $q(L) + d_j \leq Q$, i.e., when customer $j$ has not yet been visited and there is sufficient remaining capacity to cover demand $d_j$. In this case, the extended label $L'$ is given by $L' = (j, c(L) - (\kappa_j + \lambda c_{ij} + (\nu_k + \pi_k) p_{ij}), q(L) + d_j, \Pi(L) \cup \{ j\})$. A label corresponds to a route with negative reduced cost when $v(L) =0$ and $c(L) < \mu_k$. 

Dominance rules are applied to reduce the number of labels to consider. Label $L_1$ is said to dominate $L_2$ when any feasible extension of $L_2$ can be used to extend $L_1$ to a feasible route with lower or equal reduced cost. The following dominance checks form a set of sufficient conditions:
\begin{equation}
    v(L_1) = v(L_2)~\land~q(L_1) \leq q(L_2) ~\land~ c(L_1) \leq c(L_2) ~\land~ \Pi(L_1) \subseteq \Pi(L_2) .
\end{equation}
We use several well-known acceleration techniques to speed-up the labeling algorithm. First, to allow for efficient dominance checks, we store labels in buckets based on their final customer and demand, and only perform intra-bucket dominance checks. Second, we use bi-directional labeling, i.e., extend partial paths both starting from and ending at the depot. Finally, we use $ng$-path relaxation, where the customer memory $\Pi$ is truncated to only store nodes in a local neighborhood around the current node \cite{baldacci2011new}. While this allows for non-elementary routes, elementarity is restored through branching and the number of non-dominated labels is significantly reduced. We refer to \cite{costa2019exact} for more information on these acceleration techniques. 

\paragraph{Branching Rules} 
For the vehicle-index formulation \eqref{eq:vehicle_index}, we first branch on the most fractional assignment of a customer to a vehicle, i.e., on the value $\sum_{r \in R} a_{ir} x_{rk}$. When a customer is assigned to a vehicle, it is removed from the pricing problem of all other vehicles. Not assigning a customer to a vehicle is processed similarly. In case of the last-customer formulation, we instead branch on whether a customer is last on a route or not. Second, we branch on the most fractional arc. A forbidden arc is removed from all pricing problems. Arc $(i, j)$ can be forced in a solution by removing all other arcs leaving customer $i$ or entering customer $j$, with special attention paid to the case where $i$ or $j$ equals the depot. Our range branching rule acts on top of these problem-specific branching rules, i.e., the problem-specific branching rules are only invoked when range branching does not detect any branching opportunities.

\paragraph{Range Branching}

Under the range branching scheme, we need to be able to prevent the generation of routes with distance below $L$ or above $U$ in the pricing problem. As such, we must extend each label $L$ to also store the distance $d(L)$ of the partial path. To check whether label $L_1$ dominates label $L_2$, one of the following additional dominance rules must be satisfied:
\begin{align}
\begin{split}
    \left(L = 0 ~\land~ U < \infty ~\land~ d(L_1) \leq d(L_2) \right) ~\lor~ & \left(L > 0 ~\land~ U = \infty ~\land~ d(L_1) \geq d(L_2) \right) \\ ~\lor~ & \left(L > 0 ~\land~ U < \infty ~\land~ d(L_1) = d(L_2) \right).
\end{split}
\end{align}
These additional dominance rules can significantly slow down the labeling algorithm, as they potentially cause a strong increase in the number of non-dominated labels.

\subsection{Set-Up}
\label{subsec:cvrp_setup}

Similar to \cite{matl2019workload}, we construct instances based on CVRP instance \texttt{X64} \cite{uchoa2017new}. For each number $|C| \in \{15, 20, 25\}$, we construct 20 instances by selecting $|C| + 1$ customer locations from the original instance, where the first customer location serves as depot. Each instance has $K = 5$ vehicles with capacity $Q$ chosen such that all vehicles are needed to serve the demand. We set the budget $B$ equal to $110\%$ of the cost of the cost-efficient solution. The payoff of a route is equal to its distance, i.e., we aim to balance the length of the routes.

We consider the vehicle-index and last-customer formulations for the F-CVRP presented in Section~\ref{subsec:cvrp_math}, and solve both formulations using both the classical and the range branching scheme. We provide the range of the efficient solution as an upper bound to the branch-and-price algorithm, impose a time limit of one hour, and solve up to 8 pricing problems in parallel. We return at most 20 columns per pricing problem, and remove columns that have been inactive for 20 iterations. Based on preliminary computational experiments, we implement range branching with cutoff values defined by $\alpha = 0.025$. All linear programs are solved using \texttt{CPLEX 22.1.0}.

\subsection{Results}
\label{subsec:cvrp_results}

Table~\ref{tab:results} presents the results for the three sets of instances. For each combination of number of customers, formulation, and branching scheme, we present the number of instances solved to optimality within the time limit, the computing time, the optimality gap, the number of branch-and-bound nodes, and the percentage-wise reduction in range of the best solution found compared to the most cost-efficient solution ($\Delta)$. For each number of customers, the results are averaged over all 20 instances. In case not all instances are solved, the computing times and number of branch-and-bound nodes provide lower bounds on the true values. 

\begin{table}[tb!]
\caption{Computational performance of vehicle-index and last-customer formulations with classical and range branching scheme.}
\label{tab:results}
\centering 
\renewcommand{\arraystretch}{1}
\begin{tabular}{@{\extracolsep{5pt}}cllrrrrr}
\toprule
$|C|$ & Formulation & Branching & \# Solved & Time (s) & Gap (\%) & \# Nodes & $\Delta$ (\%) \\ 
\midrule
\multirow{4}{*}{15} & \multirow{2}{*}{Vehicle} & Classical & 20/20 & 207.1 & 0 & 6,619.4 & 49.4 \\ 
& & Range & 20/20 & 175.4 & 0 & 5,687.8 & 49.4 \\ 
& \multirow{2}{*}{Customer} & Classical & 17/20 & 916.9 & 4.0 & 15,726.8 & 48.7 \\ 
& & Range & 20/20 & 3.6 & 0 & 385.8 & 49.4 \\ 
\midrule
\multirow{4}{*}{20} & \multirow{2}{*}{Vehicle} & Classical & 9/20 & 2,769.5 & 33.3 & 14,446.9 & 34.2 \\ 
& & Range & 16/20 & 1,099.0 & 12.3 & 6,386.6 & 44.8 \\ 
& \multirow{2}{*}{Customer} & Classical & 1/20 & 3,494.3 & 67.4 & 21,279.1 & 25.4 \\ 
& & Range & 19/20 & 290.6 & 0.0 & 3,430.9 & 53.5 \\ 
\midrule 
\multirow{4}{*}{25} & \multirow{2}{*}{Vehicle} & Classical & 0/20 & 3,600.0 & 94.0 & 11,349.4 & 8.6 \\ 
& & Range & 6/20 & 3,053.2 & 25.2 & 983.8 & 38.6 \\ 
& \multirow{2}{*}{Customer} & Classical & 0/20 & 3,600.0 & 94.1 & 13,220.6 & 5.1 \\ 
& & Range & 14/20 & 1,741.1 & 5.8 & 1,728.1 & 54.0 \\ 
\bottomrule
\end{tabular}
\end{table}

The results in Table~\ref{tab:results} show that range branching outperforms classical branching in all cases. For all sizes and formulations, range branching leads to a strong reduction in computing time, optimality gap, and number of branch-and-bound nodes. In addition, it allows us to solve 14 more instances with $|C|=25$ customers, the largest instances we consider. The last-customer formulation seems to particularly benefit from range branching, an effect that appears to be largely driven by a huge reduction in the size of the branch-and-bound tree. While it is not competitive with the vehicle-index formulation under the classical branching scheme, it dominates this formulation on all instance sets when range branching is used. Finally, we note that fairness gains up to 54\% can be attained compared to the most cost-efficient solution.

\begin{figure}[htb!]
\centering
\begin{tikzpicture}[scale=0.9]
    \begin{groupplot}[group style={
                      group name=myplot,
                      group size= 2 by 3},
                      height=6cm,
                      width=6.5cm]
    \pgfplotsset{group/every plot/.append style={
        ymin=0, ymax=3, xmin=0, xmax=3600, extra x ticks={0}, extra y ticks={0}, grid, grid style={opacity = 0.5, dotted, black}, disabledatascaling, xtick={900, 1800, 2700, 3600}, ytick={0.5, 1.0, 1.5, 2.0, 2.5, 3.0}}}
        
    \nextgroupplot[title={Vehicle},ylabel={$N=15$}]
    \addplot[very thick, color=black!45] table[x index=0, y index=1, col sep=semicolon] {Results/bounds_15_0.csv};\label{plots:plot1}

    \addplot[very thick, dashed, color=black!45]
     table[x index=0, y index=2, col sep=semicolon] {Results/bounds_15_0.csv}; \label{plots:plot2}

    \addplot[very thick, color=black] table[x index=0, y index=1, col sep=semicolon] {Results/bounds_15_1.csv};\label{plots:plot3}

    \addplot[very thick, dashed, color=black] table[x index=0, y index=2, col sep=semicolon] {Results/bounds_15_1.csv};\label{plots:plot4}
     
    \nextgroupplot[title={Customer}]
    \addplot[very thick, color=black!45] table[x index=0, y index=1, col sep=semicolon] {Results/bounds_15_2.csv};

    \addplot[very thick, dashed, color=black!45]
     table[x index=0, y index=2, col sep=semicolon] {Results/bounds_15_2.csv};

    \addplot[very thick, color=black] table[x index=0, y index=1, col sep=semicolon] {Results/bounds_15_3.csv};

    \addplot[very thick, dashed, color=black] table[x index=0, y index=2, col sep=semicolon] {Results/bounds_15_3.csv};
        \nextgroupplot[ylabel={$N=20$}]
    \addplot[very thick, color=black!45] table[x index=0, y index=1, col sep=semicolon] {Results/bounds_20_0.csv};

    \addplot[very thick, dashed, color=black!45]
     table[x index=0, y index=2, col sep=semicolon] {Results/bounds_20_0.csv};

    \addplot[very thick, color=black] table[x index=0, y index=1, col sep=semicolon] {Results/bounds_20_1.csv};

    \addplot[very thick, dashed, color=black] table[x index=0, y index=2, col sep=semicolon] {Results/bounds_20_1.csv};
        \nextgroupplot
    \addplot[very thick, color=black!45] table[x index=0, y index=1, col sep=semicolon] {Results/bounds_20_2.csv};

    \addplot[very thick, dashed, color=black!45]
     table[x index=0, y index=2, col sep=semicolon] {Results/bounds_20_2.csv};

    \addplot[very thick, color=black] table[x index=0, y index=1, col sep=semicolon] {Results/bounds_20_3.csv};

    \addplot[very thick, dashed, color=black] table[x index=0, y index=2, col sep=semicolon] {Results/bounds_20_3.csv};
    \nextgroupplot[xlabel={Time (s)}, ylabel={$N=25$}]
    \addplot[very thick, color=black!45] table[x index=0, y index=1, col sep=semicolon] {Results/bounds_25_0.csv};

    \addplot[very thick, dashed, color=black!45]
     table[x index=0, y index=2, col sep=semicolon] {Results/bounds_25_0.csv};
     
    \addplot[very thick, color=black] table[x index=0, y index=1, col sep=semicolon] {Results/bounds_25_1.csv};

    \addplot[very thick, dashed, color=black] table[x index=0, y index=2, col sep=semicolon] {Results/bounds_25_1.csv};
    \nextgroupplot[xlabel={Time (s)}]
    \addplot[very thick, color=black!45] table[x index=0, y index=1, col sep=semicolon] {Results/bounds_25_2.csv};

    \addplot[very thick, dashed, color=black!45]
     table[x index=0, y index=2, col sep=semicolon] {Results/bounds_25_2.csv};
     
    \addplot[very thick, color=black] table[x index=0, y index=1, col sep=semicolon] {Results/bounds_25_3.csv};

    \addplot[very thick, dashed, color=black] table[x index=0, y index=2, col sep=semicolon] {Results/bounds_25_3.csv};
    \end{groupplot}
    \path (myplot c1r1.outer north west)
          -- node[anchor=south,rotate=90] {Fraction of best known bound}
          (myplot c1r3.outer south west);
\path (myplot c1r1.north west|-current bounding box.north)--
      coordinate(legendpos)
      (myplot c2r1.north east|-current bounding box.north);
\matrix[
    matrix of nodes,
    anchor=south,
    draw,
    inner sep=0.2em,
    draw
  ]at([yshift=1ex]legendpos)
  {\ref{plots:plot1}& Classical (LB)&[5pt]
\ref{plots:plot2}& Classical (UB)\\
\ref{plots:plot3}& Range (LB)&[5pt]
\ref{plots:plot4}& Range (UB)\\};
\end{tikzpicture}
\caption{Progression of lower and upper bound of vehicle-index formulation (left) and last-customer formulation (right) with classical and range branching scheme.}
\label{fig:results_bounds}
\end{figure}
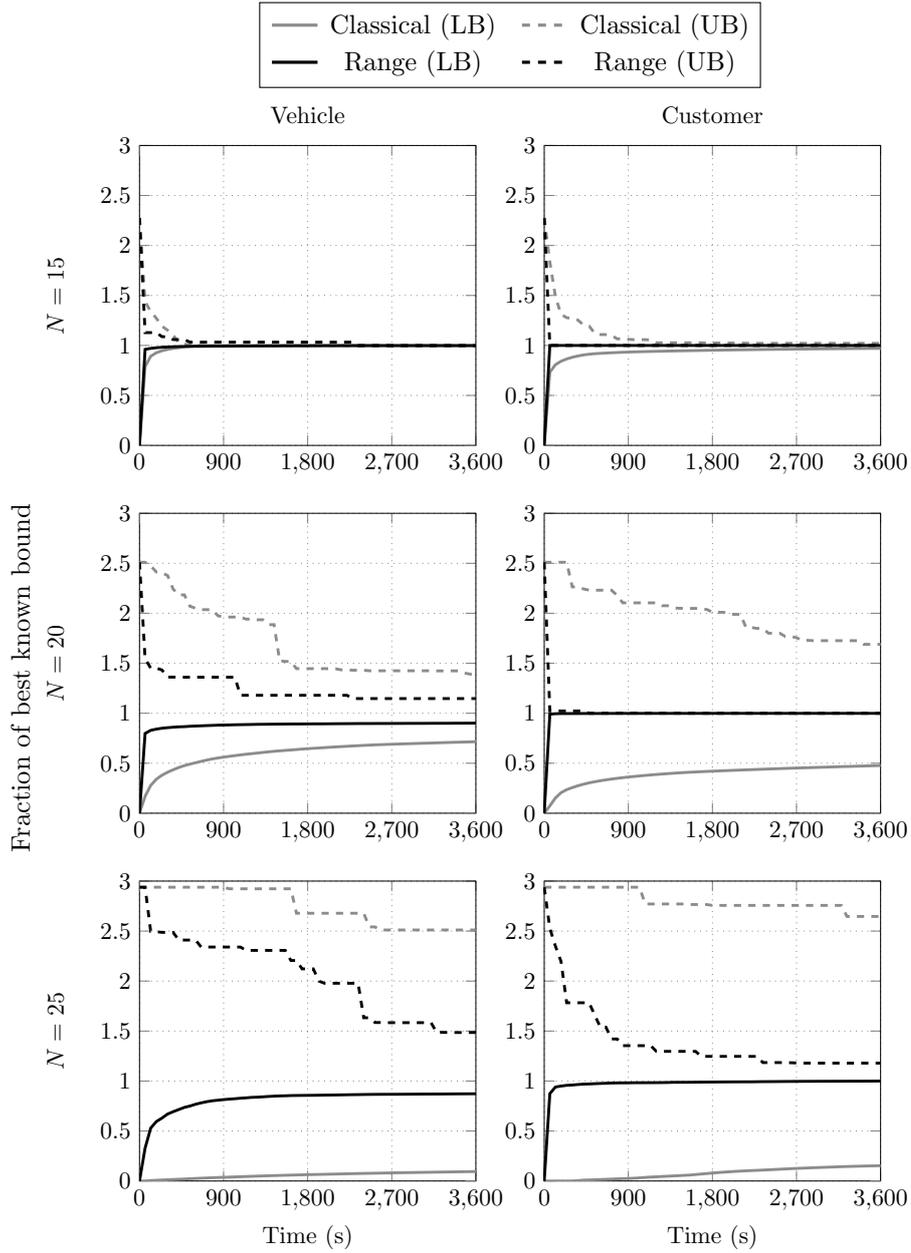

Figure~\ref{fig:results_bounds} displays the progression of the lower and upper bounds, for each combination of number of customers, formulation, and branching scheme, throughout the course of the branch-and-price algorithm. These relative values are expressed in terms of the best known lower bound, and subsequently averaged over all instances. We find that the performance of range branching is largely driven by its ability to quickly drive up the lower bound to near its final value. This is well in line with the theoretical properties of range branching derived in Section~\ref{subsec:branching_theory}. On the larger instances, the lower bounds of the classical branching scheme are still slowly increasing after an hour of computation, whereas the bounds with range branching reach near-optimal values early in the process. This is especially true for the last-customer formulation that attains near-optimal lower bounds in the first few iterations of the algorithm. This might explain why it outperforms the vehicle-index formulation. In addition to improved lower bounds, range branching also has a positive effect on the upper bound evolution throughout the course of branch and price. Hence, range branching appears to be the method of choice even on large instances that are not solved to optimality within the time limit.

\subsection{TSP-Optimal Routes}
\label{subsec:cvrp_tsp}

In our setup, the range of a solution can potentially be reduced by (i) exchanging customers across routes or (ii) changing the customer order along a route. The latter can be undesirable in practice, as the resulting routes might violate TSP-optimality. That is, the routes might deviate from the shortest tour visiting all customers and the depot, resulting in an unnecessarily large distance to be traveled. In this section, we conduct additional experiments to show that directly enforcing TSP-optimality is computationally demanding, but a small postprocessing step is sufficient for our method to perform well on the problem restricted to TSP-optimal routes.

We start by comparing the performance of range branching on the standard F-CVRP and the F-CVRP restricted to TSP-optimal routes. Since the impact of range branching for the TSP-optimal case is similar to the general case of Section~\ref{subsec:cvrp_results}\footnote{We compared the classical B\&P algorithm with range-B\&P on instances featuring $|C|=15$ customers and a route budget equal to $B=101\%$ of the cost-efficient solution. The classical algorithm solved 5 out of 20 instances to optimality and attained an average optimality gap of 17.2\%. Range branching solved 17 out of 20 instances, attaining an average gap of 0.9\% (see the first row of Table~\ref{tab:tsp_computational}).}, we do not consider the classical branching scheme here. The first algorithm, denoted by `General', is the range-B\&P algorithm applied to the last-customer formulation. This algorithm allows TSP-violating routes. The second algorithm, denoted by `TSP', is range-B\&P applied to the vehicle-index formulation. Here, we modify the dominance rules to ensure that only TSP-optimal routes are generated by the pricing algorithm. In particular, one partial route dominates another when it covers the same customers in a shorter or equal distance. Note that this introduces a significant weakening of the dominance rules. We compare the algorithms using the same instances and parameter settings as in Section~\ref{subsec:cvrp_setup}. Since the degree to which routes violate TSP-optimality is likely to depend on the total route budget, we vary the budget $B$ to be 101\%, 105\%, and 110\% of the cost-efficient solution.

Table~\ref{tab:tsp_computational} presents the computational results of these experiments. For each combination of number of customers, route budget, and algorithm, we present the number of instances solved to optimality within the time limit, the computing time, and the optimality gap. For each combination of number of customers and budget, the results are averaged over all 20 instances. In case not all instances are solved, the computing times and number of branch-and-bound nodes provide lower bounds on the true values. A dash indicates that the root node relaxation could not be solved to optimality within the time limit.

\begin{table}[tb!]
\caption{Computational performance of range branching with only TSP-optimal routes and with general routes.}
\label{tab:tsp_computational}
\centering 
\renewcommand{\arraystretch}{1}
\begin{tabular}{@{\extracolsep{5pt}}clrrrrrr}
\toprule
& & \multicolumn{3}{c}{General} & \multicolumn{3}{c}{TSP} \\
\cmidrule(l){3-5} \cmidrule(l){6-8} 
$|C|$ & $B$ (\%) & \# Solved & Time (s) & Gap (\%) & \# Solved & Time (s) & Gap (\%)\\ 
\midrule
\multirow{3}{*}{15} & 101\% & 20/20 & 8.0 & 0 & 17/20 & 824.4 & 0.9 \\ 
& 105\% & 20/20 & 70.6 & 0 & 16/20 & 956.9 & 3.6 \\ 
& 110\% & 20/20 & 3.6 & 0 & 20/20 & 476.6 & 0 \\ 
\midrule
\multirow{3}{*}{20} & 101\% & 18/20 & 576.9 & 1.1 & 3/20 & 3,479.4 & 10.0 \\ 
& 105\% & 19/20 & 294.0 & 0.0 & 0/20 & 3,600.0 & 27.5 \\ 
& 110\% & 19/20 & 290.6 & 0.0 & 0/20 & 3,600.0 & 41.5 \\ 
\midrule
\multirow{3}{*}{25} & 101\% & 13/20 & 1,865.1 & 7.1 & - & - & - \\ 
& 105\% & 12/20 & 1,841.4 & 14.2  & - & - & - \\ 
& 110\% & 14/20 & 1,741.1 & 5.8 & - & - & - \\ 
\bottomrule
\end{tabular}
\end{table}

The results in Table~\ref{tab:tsp_computational} show the additional computational complexity that arises when enforcing TSP-optimality. The performance of the range-B\&P algorithm with general routes is in line with the results of Section~\ref{subsec:cvrp_results}. A majority of the instances is solved to optimality, and the route budget does not appear to have any significant effect on computational performance. However, enforcing TSP-optimal routes comes at a severe computational cost. Only a handful of 20-customer instances is solved to optimality, and all of the 25-customer instances exceed the time limit before solving the root node. This is not surprising, given that the modified dominance rules imply a near-enumeration of all possible routes by the labeling algorithm. We conclude that directly enforcing TSP-optimality is intractable for all non-trivial instance sizes.

Fortunately, the range-B\&P algorithm with general routes provides an alternative way of solving the F-CVRP with only TSP-optimal routes. In particular, any lower bound for the problem with general routes is a valid lower bound for the problem with only TSP-optimal routes. Second, a solution containing non-TSP routes can be easily converted to a feasible one by rearranging the order of customers in each route, for example, by using a compact MILP formulation of the TSP. This yields a valid upper bound.

Table~\ref{tab:tsp_bounds} compares the bounds obtained in this way with the bounds from the TSP-algorithm. For all number of customers and budgets, it reports the number of instances for which the converted solutions are provably optimal, the relative percentage by which the range was underestimated by allowing non-TSP routes ($\Delta_R$), the relative percentage increase in lower bound compared to the TSP-algorithm ($\Delta_{LB}$), and the relative percentage reduction in upper bound compared to the TSP-algorithm ($\Delta_{UB}$). Finally, it reports the optimality gap of the converted solutions, computed using the best-known lower bound from both methods. A dash indicates that the TSP-algorithm did not compute any valid bounds within the time limit.

\begin{table}[tb!]
\caption{Analysis of bounds obtained by converting general routes to TSP-optimal routes.}
\label{tab:tsp_bounds}
\centering 
\renewcommand{\arraystretch}{1}
\begin{tabular}{@{\extracolsep{5pt}}clrrrrr}
\toprule
$|C|$ & $B$ (\%) & \# TSP-Optimal & $\Delta_R$ (\%) & $\Delta_{LB}$ (\%) & $\Delta_{UB}$ (\%) & Gap (\%) \\ 
\midrule
\multirow{3}{*}{15} & 101\% & 19/20 & 0.0 & 0.3 & 0.6 & 0.0 \\ 
& 105\% & 18/20 & 1.2 & 1.4 & 1.7 & 0.3 \\ 
& 110\% & 13/20 & 7.7 & -4.8 & -3.8 & 3.2 \\ 
\midrule
\multirow{3}{*}{20} & 101\% & 17/20 & 0.3 & 6.6 & 4.1 & 1.2 \\ 
& 105\% & 8/20 & 4.1 & 5.9 & 20.7 & 3.1 \\ 
& 110\% & 4/20 & 13.1 & 1.3 & 28.1 & 10.6 \\ 
\midrule
\multirow{3}{*}{25} & 101\% & 9/20 & 0.4 & - & - & 7.6 \\ 
& 105\% & 4/20 & 2.7 & - & - & 16.9 \\ 
& 110\% & 3/20 & 14.9 & - & - & 21.3 \\ 
\bottomrule
\end{tabular}
\end{table}

The results of Table~\ref{tab:tsp_bounds} show that the proposed method forms an effective heuristic for the F-CVRP with only TSP-optimal routes, and even solves many of the problem instances to optimality. It provides improved bounds, indicated by positive values of $\Delta_{LB}$ and $\Delta_{UB}$, on nearly all instances. The only exception is the case with 15 customers and a large route budget of 110\%, in which (i) the instance is sufficiently small as to be solved to optimality by the TSP-algorithm and (ii) the budget is sufficiently large as to allow for many TSP-violated routes in the general model. The increase in range after converting routes to TSP-optimal ones, i.e., the degree to which TSP-violating routes are used, is generally negligible for smaller route budgets, but becomes sizable as the route budget reaches 110\% of the cost-efficient solution. Similarly, the optimality gaps are well below 10\% except for the larger instances with high route budgets. Given the inefficiency of the modified labeling algorithm, we conclude that it is preferable to use general routes and convert them to TSP-optimal routes in a post-processing step, than to directly enforce TSP-optimality in the pricing problem. Of course, the possibility of devising an entirely new algorithm for the TSP-optimal case remains open.

\section{Fair Generalized Assignment Problem}
\label{sec:gap}

In this section, we introduce an application of the range minimization problem that we call the \emph{fair generalized assignment problem (F-GAP)}, a fairness-oriented version of the generalized assignment problem.

\begin{definition}[Fair Generalized Assignment Problem]
Consider $m$ jobs to be assigned to $n$ agents. Assigning job $j \in [m]$ to agent $i \in [n]$ yields a profit $p_{ij}$ and consumes $c_{ij}$ units of the resource of agent $i$. The resource capacity of agent $i$ equals $C_i$. All costs and capacities are integer-valued. An assignment is feasible when the total resource consumption of each agent does not exceed his capacity, and the overall profit of all jobs is at least $P$. The goal of the fair generalized assignment problem is to find a feasible assignment that minimizes the range of the resource consumption of different agents.
\end{definition}

Branch and price is not necessarily the preferred solution method for the generalized assignment problem, as commercial solvers are typically faster in solving the regular, cost-oriented problem. We show that, despite this fact, branch and price is several factors faster than solving a standard compact formulation for F-GAP. In other words, when minimizing range it can be beneficial to switch to a Dantzig-Wolfe reformulation as compared to a standard compact formulation.

We present two mathematical formulations of F-GAP in Section~\ref{subsec:gap_math} and present a branch-and-price algorithm with range branching in Section~\ref{subsec:gap_branch}. We describe the set-up of our experiments in Section~\ref{subsec:gap_setup} and discuss the results in Section~\ref{subsec:gap_results}.

\subsection{Mathematical Formulation}
\label{subsec:gap_math}

\paragraph{Compact Formulation}

We introduce the binary decision variable $x_{ij}$ to indicate whether or not job $j$ is assigned to agent $i$. A compact formulation of F-GAP reads as
\begin{subequations}
\begin{align}
\min \quad & \eta - \gamma & \\ 
\text{s.t.} \quad & \sum_{i \in [n]} \sum_{j \in [m]} p_{ij} x_{ij} \geq P \\
& \sum_{i \in [n]} x_{ij} = 1 && \forall j \in [m] \\
& \sum_{j \in [m]} c_{ij} x_{ij} \leq C_{i} && \forall i \in [n] \\ 
& \sum_{j \in [m]} c_{ij} x_{ij} \leq \eta && \forall i \in [n] \\ 
& \sum_{j \in [m]} c_{ij} x_{ij} \geq \gamma && \forall i \in [n] \\
& x_{ij} \in \{0, 1\} && \forall i \in [n],~j \in [m].
\end{align}
\label{eq:fgap_compact}
\end{subequations}

\paragraph{Dantzig-Wolfe Reformulation}

We define an assignment as a set of jobs that can be performed by a single agent without violating its resource constraint. Formally, an assignment $a=(i, S)$ is a tuple denoting that agent $i$ performs all the jobs in set $S\subseteq\{1,\ldots,m\}$ with $\sum_{j\in S}c_{ij}\leq C_i$. Let $A$ be the set of feasible assignments, with $A_{i} \subseteq A$ denoting the set of feasible assignments to agent $i$. For each $a \in A$, let $p_{a}$ denote the profit of assignment $a$, $c_a$ the resource consumption of assignment $a$, and let $k_{aj}$ be a binary parameter indicating whether job $j$ is included in assignment $a$. We introduce the binary decision variable $x_{a}$ indicating whether or not assignment $a$ is selected. A Dantzig-Wolfe reformulation of F-GAP then reads as
\begin{subequations}
\begin{align}
\min \quad & \eta - \gamma \label{eq:fgap_obj} \\ 
\text{s.t.}\quad & \sum_{a \in A} p_{a} x_{a} \geq P \label{eq:fgap_profit} \\
& \sum_{a \in A} k_{aj} x_a=1&& \forall j \in [m] \label{eq:fgap_cover} \\
& \sum_{a \in A_{i}} x_a \leq 1 && \forall i \in [n] \label{eq:fgap_assignment} \\
& \sum_{a \in A_{i}} c_a x_a \leq \eta && \forall i \in [n] \label{eq:fgap_max} \\
& \sum_{a \in A_{i}} c_a x_a \geq \gamma && \forall i \in [n] \label{eq:fgap_min} \\
& x_{a} \in \{0, 1\} && \forall a \in A. \label{eq:fgap_domain}
\end{align}
\label{eq:fgap}
\end{subequations}
Note that constraints~(\ref{eq:fgap_max})-(\ref{eq:fgap_min}) are formulated in line with general constraints~(\ref{eq:general_min_max}), following from the fact that each agent is always assigned exactly one assignment. The reformulation contains an exponential number of variables, but can be solved efficiently by branch and price \cite{savelsbergh1997branch}. It is easy to see that the reformulation is a valid formulation of F-GAP.

\begin{proposition}[Valid F-GAP Formulation]
Formulation (\ref{eq:fgap}) is a valid formulation of F-GAP.
\end{proposition}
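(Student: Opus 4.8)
The plan is to show that formulation~(\ref{eq:fgap}) satisfies the two conditions of Definition~\ref{def:range_problem}: namely, that a vector $\bm{z}=(\bm{x},\eta,\gamma)$ is feasible in~(\ref{eq:fgap}) if and only if $\bm{x}\in\mathcal{X}$ and $\bm{z}$ is range-respecting. To do this I first need to pin down what $\mathcal{X}$ is in the Dantzig-Wolfe space: it is the set of binary $\bm{x}$ satisfying the profit constraint~(\ref{eq:fgap_profit}), the job-covering constraints~(\ref{eq:fgap_cover}), and the per-agent selection constraints~(\ref{eq:fgap_assignment}). The payoff of each column $a$ is its resource consumption $c_a$, so $p_{\max}(\bm{x})=\max_{a:x_a>0}c_a$ and $p_{\min}(\bm{x})=\min_{a:x_a>0}c_a$.

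The key structural observation, already flagged in the text, is that constraints~(\ref{eq:fgap_assignment}) together with the covering constraints~(\ref{eq:fgap_cover}) force each agent to be assigned \emph{exactly} one assignment in any binary feasible solution: since every job must be covered and each job belongs to some agent's assignment, no agent can be left idle, so $\sum_{a\in A_i}x_a=1$ holds with equality for all $i\in[n]$. This is precisely the hypothesis under which the aggregated constraints~(\ref{eq:general_min_max}) correctly model the maximum and minimum, with the index sets $I_j$ taken to be the $A_i$. First I would establish this ``exactly one'' property rigorously, then invoke it to argue the two directions.

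For the forward direction, suppose $\bm{z}=(\bm{x},\eta,\gamma)$ is feasible in~(\ref{eq:fgap}) with $\bm{x}$ binary. Then $\bm{x}\in\mathcal{X}$ is immediate from~(\ref{eq:fgap_profit})--(\ref{eq:fgap_assignment}). For each agent $i$, the unique selected assignment $a^*\in A_i$ has $x_{a^*}=1$, so~(\ref{eq:fgap_max}) reads $c_{a^*}\leq\eta$ and~(\ref{eq:fgap_min}) reads $c_{a^*}\geq\gamma$. Taking the maximum and minimum of $c_{a^*}$ over all agents gives $\eta\geq p_{\max}(\bm{x})$ and $\gamma\leq p_{\min}(\bm{x})$, so $\bm{z}$ is range-respecting. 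For the reverse direction, suppose $\bm{x}\in\mathcal{X}$ and $\bm{z}$ is range-respecting. Constraints~(\ref{eq:fgap_profit})--(\ref{eq:fgap_assignment}) hold by $\bm{x}\in\mathcal{X}$; the exactly-one property makes the left-hand sides of~(\ref{eq:fgap_max}) and~(\ref{eq:fgap_min}) equal to the single selected $c_{a^*}\in[p_{\min}(\bm{x}),p_{\max}(\bm{x})]$, and range-respecting gives $\eta\geq p_{\max}(\bm{x})\geq c_{a^*}$ and $\gamma\leq p_{\min}(\bm{x})\leq c_{a^*}$, so both constraints are satisfied.

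I do not expect any step to be a serious obstacle; this is a verification against Definition~\ref{def:range_problem}. The one point requiring a little care is the ``exactly one assignment per agent'' claim, since constraint~(\ref{eq:fgap_assignment}) is only an inequality $\sum_{a\in A_i}x_a\leq 1$. The argument that it must hold with equality relies on the interplay between covering all jobs~(\ref{eq:fgap_cover}) and the fact that assignments are agent-specific, so I would make sure to spell out that an agent serving no jobs would leave its jobs uncovered only if it were forced to serve them---more precisely, I would note that if some agent selected the empty assignment, the total number of selected assignments would be at most $n-1$, yet the $m$ covering constraints can be satisfied with at most $n-1$ agents, so strictly the claim is that every agent appearing in a feasible cover is selected exactly once. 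Handling the empty assignment $a=(i,\emptyset)$ carefully (it has $c_a=0$ and contributes no jobs) is the only subtlety, and I would either exclude it or verify that the range-respecting conditions remain consistent when an agent is genuinely unused.
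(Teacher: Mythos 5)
Your proof has a genuine gap, and it sits exactly where you yourself hesitated. The claim that constraints~(\ref{eq:fgap_cover}) and~(\ref{eq:fgap_assignment}) force $\sum_{a\in A_i}x_a=1$ for \emph{every} agent is false: the covering constraints only require each job to be covered by \emph{some} selected assignment, so any agent whose jobs can be absorbed by the others may be left idle. Concretely, take two agents and one job with $0<c_{11}\leq C_1$ and $P$ small; then the binary $\bm{x}$ selecting only the assignment $(1,\{1\})$ satisfies (\ref{eq:fgap_profit})--(\ref{eq:fgap_assignment}), yet agent $2$ selects nothing. For such an $\bm{x}$ your reverse direction breaks, and not just in your write-up but in fact: range-respecting only requires $\gamma\leq p_{\min}(\bm{x})=c_{11}>0$, while constraint~(\ref{eq:fgap_min}) for the idle agent reads $0\geq\gamma$. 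So ``$\bm{x}\in\mathcal{X}$ and range-respecting'' does \emph{not} imply feasibility in~(\ref{eq:fgap}); the deferred ``verification that the range-respecting conditions remain consistent when an agent is genuinely unused'' would actually fail. (Your forward direction, by contrast, is fine and never needed the exactly-one property: every selected column is the unique selected column of its own $A_i$, so (\ref{eq:fgap_max})--(\ref{eq:fgap_min}) give $c_a\leq\eta$ and $c_a\geq\gamma$ for each selected $a$, hence $\eta\geq p_{\max}(\bm{x})$ and $\gamma\leq p_{\min}(\bm{x})$.)

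The missing idea --- which is what the paper implicitly relies on when it asserts that ``each agent is always assigned exactly one assignment'' (the paper states the proposition without proof, offering only that remark) --- is a convention rather than a deduction: the empty assignment $(i,\emptyset)$ is a feasible column of $A_i$ with $c_a=0$, and idleness is represented by selecting it, i.e., constraint~(\ref{eq:fgap_assignment}) is understood to hold with equality. Under that convention an unused agent contributes a selected column of payoff $0$, so $p_{\min}(\bm{x})=0$ and range-respecting itself forces $\gamma\leq 0$, matching~(\ref{eq:fgap_min}); the sets $A_i$ then satisfy the exactly-one hypothesis of~(\ref{eq:general_min_max}), and both directions of Definition~\ref{def:range_problem} go through exactly as you outline. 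This convention also reconciles the column-based notion of range with the problem's own notion (range of resource consumption \emph{of agents}, where an idle agent consumes $0$). Your alternative fix --- excluding the empty assignment from $A$ --- goes in the wrong direction: it would leave idleness representable only by selecting nothing, which is precisely the case that destroys validity.
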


Recall that the concept of a valid formulation does not apply to the compact formulation (\ref{eq:fgap_compact}), since it is not a column-based formulation and therefore does not meet the conditions of Definition~\ref{def:range_problem}.

\subsection{Branch and Price}
\label{subsec:gap_branch}

We largely follow the branch-and-price implementation of \cite{savelsbergh1997branch}, and show how to augment it with our range branching rule.

\paragraph{Pricing Problem and Algorithm}

Let $\kappa, \bm{\lambda}, \bm{\mu}, \bm{\nu}$, and $\bm{\pi}$ denote the dual variables of constraints (\ref{eq:fgap_profit})-(\ref{eq:fgap_min}), respectively. The reduced cost of assignment $a \in A_i$ is then given by
\begin{align}
   RC(x_a) =& -\mu_i - \kappa p_a - \sum_{j \in [m]} \lambda_j k_{aj} - c_a(\nu_i + \pi_i)
   \\
   =& -\mu_i - \sum_{j \in [m]} (\lambda_j + \kappa p_{ij} + (\nu_i + \pi) c_{ij}) k_{aj}.
\end{align}
The pricing problem consists of finding a feasible assignment with a negative reduced cost.  It can be modeled as a series of knapsack problems, one per agent. Here, the knapsack problem associated with agent $i$ has capacity $C_i$, and job $j$ has weight $c_{ij}$ and value $\lambda_j + \kappa p_{ij} + (\nu_i + \pi) c_{ij}$. We solve each knapsack problem using a custom implementation of the well-known pseudo-polynomial dynamic programming (DP) algorithm. A DP table of size $\mathcal{O}(m C_i)$ is constructed, where entry $(k, c)$ corresponds to the maximum value that can be attained by filling a knapsack of size $c$ with items $\{1, \dots, k\}$. The value of the optimal knapsack is given by entry $(m, C_i)$. 

\paragraph{Branching Rule}

We branch on the most fractional assignment of a job to an agent, i.e., on the value of $\sum_{a \in A_i} k_{aj} x_a$. Note that this corresponds to branching on $x_{ij}$ in the compact formulation. If a job is assigned to an agent, all assignments of this agent not containing the job are removed from the model and the remaining resource capacity in the pricing problem is adjusted downwards. Assignments to other agents containing this job are removed from the problem and the job is removed from the pricing problem of these agents. Not assigning a job to an agent is processed similarly. Our range branching rule acts on top of this problem-specific branching rule and is always invoked first.

\paragraph{Range Branching}

To adhere to the variable fixing strategy of the range branching rule, we need to be able to prevent the generation of assignments with resource consumption above $U$ or below $L$ in the pricing problem. The former is readily achieved by artificially reducing the knapsack capacity from $C_i$ to $U$. The latter requires a slight adjustment of the DP table. We re-define entry $(k, c)$ to be equal to the maximum value of a knapsack of weight \emph{exactly} $c$ with items $\{1, \dots, k\}$. These entries can be computed with the same recursive relation as in the original dynamic program after a minor change in the initialisation procedure. The value of the optimal knapsack is obtained by taking the maximum over entries from $(m, L)$ up to $(m, U)$.

\subsection{Set-Up}
\label{subsec:gap_setup}

We evaluate the performance of range branching by comparing it with a classical branching scheme as well as a MILP-solver on the compact formulation. We use benchmark instances for the generalized assignment problem proposed by \cite{chu1997genetic}. In particular, we consider the instances in classes A, B, and C with 100 jobs and 5, 10, and 20 agents, respectively. We compute the profit-optimal solution value $P^*$ to these instances and require a minimum profit of $(1 - \theta)P^*$, where $\theta$ ranges from 0.25\% up to 1\% in steps of 0.25\%. We impose a time limit of one hour, return at most one column per pricing problem, and remove columns that have been inactive for 30 iterations. We use range-branching with cutoff values corresponding to $\alpha=0$, i.e., we branch on all solutions that are not strictly range-respecting. All linear programs, as well as the compact formulation, are solved with \texttt{CPLEX 22.1.0}.

\subsection{Results}
\label{subsec:gap_results}

\begin{table}[htb!]
\caption{Computational performance of compact formulation and Dantzig-Wolfe reformulation with classical and range branching scheme.}
\label{tab:results_gap}
\centering
\renewcommand{\arraystretch}{1}
\begin{tabular}{@{\extracolsep{5pt}}cllrrrr} 
\toprule
\# Agents & $1 - \theta$ & Method & \# Solved & Time (s) & Gap (\%) & $\Delta$ (\%) \\ 
\midrule
\multirow{12}{*}{5} & \multirow{3}{*}{0.99}& MILP & 3/3 & 117.0 & 0 & 99.0 \\
 & & B\&P & 1/3 & 2,446.3 & 66.7 & 0 \\
 & & Range-B\&P & 3/3 & 166.9 & 0 & 99.0 \\
 \cmidrule{2-7}
& \multirow{3}{*}{0.9925} & MILP & 3/3 & 85.4 & 0 & 96.1 \\
 & & B\&P & 1/3 & 2,487.2 & 63.8 & 0\\
 & & Range-B\&P & 3/3 & 144.8 & 0 & 96.1 \\
 \cmidrule{2-7}
& \multirow{3}{*}{0.995} & MILP  & 3/3 & 67.8 & 0 & 91.0 \\
 & & B\&P & 2/3 & 1,930.5 & 33.3 & 57.8 \\
 & & Range-B\&P & 3/3 & 76.3 & 0 & 91.0 \\
 \cmidrule{2-7}
& \multirow{3}{*}{0.9975} & MILP & 3/3 & 34.6 & 0 & 70.1 \\
 & & B\&P & 3/3 & 974.6  & 0 & 70.1 \\
 & & Range-B\&P & 3/3 & 106.2 & 0 & 70.1 \\
\midrule
\multirow{12}{*}{10}  & \multirow{3}{*}{0.99}& MILP & 1/3 & 2,716.4 & 66.7 & 91.7 \\
 & & B\&P & 0/3 & 3,600.0 & 100.00 & 1.9 \\
 & & Range-B\&P & 3/3 & 52.0 & 0 & 99.7 \\
 \cmidrule{2-7}
& \multirow{3}{*}{0.9925} & MILP & 2/3 & 1,325.2 & 33.3 & 93.4 \\
 & & B\&P & 0/3 & 3,600.0 & 100.0 & 1.9 \\
 & & Range-B\&P & 3/3 & 90.5 & 0 & 97.1 \\
 \cmidrule{2-7}
& \multirow{3}{*}{0.995} & MILP & 2/3 & 1,262.4 & 33.3 & 85.5 \\
 & & B\&P  & 0/3 & 3,600.0 & 94.0 & 1.9 \\
 & & Range-B\&P & 3/3 & 132.8 & 0 & 89.2 \\
 \cmidrule{2-7}
& \multirow{3}{*}{0.9975} & MILP & 3/3 & 113.2 & 0 & 72.5 \\
 & & B\&P & 0/3 & 3,600.0 & 77.6 & 1.9 \\
 & & Range-B\&P & 3/3 & 22.8 & 0 & 72.5 \\
 \midrule
\multirow{12}{*}{20} & \multirow{3}{*}{0.99}& MILP & 0/3 & 3,600.0 & 100.0 & 88.9 \\
 & & B\&P & 0/3 & 3,600.0 & 100.0 & 0 \\
 & & Range-B\&P & 3/3 & 758.6 & 0 & 96.2 \\
 \cmidrule{2-7}
& \multirow{3}{*}{0.9925} & MILP & 0/3 & 3,600.0 & 83.3 & 88.9 \\
 & & B\&P & 0/3 & 3,600.0 & 100.0 & 0\\
 & & Range-B\&P & 3/3 & 164.5 & 0 & 91.3 \\
 \cmidrule{2-7}
& \multirow{3}{*}{0.995} & MILP & 2/3 & 1,506.6 & 33.3 & 82.2 \\
 & & B\&P & 0/3 & 3,600.0 & 98.0 & 0\\
 & & Range-B\&P & 3/3 & 56.7 & 0 & 83.0 \\
 \cmidrule{2-7}
& \multirow{3}{*}{0.9975} & MILP & 3/3 & 166.7 & 0 & 63.0 \\ 
 & & B\&P & 0/3 & 3,600.0 & 75.3 & 0\\
 & & Range-B\&P & 3/3 & 10.8 & 0 & 63.0 \\
\midrule
 & & MILP & 25/36 & 1,220.4 & 29.2 & 85.2 \\
\multicolumn{2}{c}{\textbf{Average}} & B\&P & 7/36 & 3,094.4 & 75.7 & 11.3 \\
 & & Range-B\&P & 36/36 & 148.6 & 0 & 87.4 \\
\bottomrule
\end{tabular}
\end{table}

Table~\ref{tab:results_gap} presents the results for all instance configurations. For each combination of number of customers, profit lower bound, and solution method, we present the number of instances solved to optimality, the computing time, optimality gap, and the percentage-wise reduction in range of the best solution found compared to the most cost-efficient solution ($\Delta)$. Per row, the results are averaged over the three instances of type A, B, and C, and the final row averages over all instance configurations. In case not all instances are solved to optimality, the computing times provide lower bounds on the true values. 

The results in Table~\ref{tab:results_gap} once again show the drastic improvement in computational performance following from the use of range branching. The classical branch-and-price implementation is able to solve only 7 out of 36 instances, all of which with 5 agents. When augmenting the branching scheme with our range branching rule, all instances can be solved to optimality and the average computing time is reduced by a factor larger than 20. Range-B\&P also outperforms the commercial MILP-solver applied to the compact formulation: 11 more instances are solved to optimality, and the average computing time decreases by a factor of 8. The average reduction in range is two percentage points higher for range-B\&P, suggesting that strictly better integral solutions are found by this method. This result is mainly driven by the instances with 10 or 20 agents. On instances with 5 agents, the MILP-solver is on average faster than range-B\&P, although computing times are of the same order of magnitude. Across all methods, we observe that a tighter profit lower bound generally leads to a reduction in computing times, logically following from the fact that the feasible region becomes smaller. Interestingly, we observe that the computing time of range-B\&P often decreases as the number of agents grows. This phenomenon can largely be attributed to the lower maximum resource capacity per agent in these instances. As a result, the number of potential assignments is significantly reduced, which in turn decreases the time required to solve each node in the branch-and-price tree. Finally, we note that a large reduction in range can be attained at the cost of only a 1\% profit reduction.  

\section{Order-Based Minimization}
\label{sec:order}

Up to now, we have focused on range minimization problems, where the goal is to minimize the difference between the largest and smallest payoff among a set of columns. From a slightly more general perspective, we can see this as minimizing a linear combination of the ordered payoffs, in which we only place non-zero weight on the first and last ordered value (i.e., 1 and -1). Such order-based objective functions are frequently encountered in fairness-oriented optimization \cite{tsang2023unified}. For example, the Gini deviation is an important fairness measure that penalizes deviations between all pairs of payoffs, not just between the largest and smallest. It generalizes the range and can be written as an order-based objective function. In this section, we show how the range branching rule can be generalized to order-based minimization problems.

The remainder of this section is structured as follows. We provide a formal definition of the order-based minimization problem as well as a mathematical formulation in Sections~\ref{subsec:order_problem} and \ref{subsec:order_formulation}, respectively. We propose order branching, a generalization of range branching, in Section~\ref{subsec:order_branching}, and evaluate its effectiveness by minimizing the Gini deviation in a CVRP application in Section~\ref{subsec:order_results}.

\subsection{Problem Description }
\label{subsec:order_problem}

We formally define the order-based minimization problem by making some additional assumptions on top of the setting described in Section~\ref{subsec:problem_description}. As before, we consider the problem of selecting a feasible subset of columns satisfying some linear constraints, i.e., the feasible region is contained in $\mathcal{X} = \{ \bm{x} \in \{0, 1\}^N : A \bm{x} \leq \bm{b} \}$. In addition, to ensure that the order-based objective is well-defined, we assume that a fixed number of exactly $K$ columns is to be selected in any solution, where we require $K \geq 2$. This could be, for example, the number of available drivers (one per vehicle) in a vehicle routing application, or the number of agents in a generalized assignment problem. The feasible region is now given by $\mathcal{X}' = \{ \bm{x} \in \mathcal{X} : \bm{1}^\top \bm{x} = K\} \subseteq \mathcal{X}$. For any $\bm{x} \in \mathcal{X}'$, let $\bm{z}(\bm{x}) \in \mathbb{R}^K$ denote the vector of ordered payoffs, such that $z_k(\bm{x})$ is the value of the $k$-th largest payoff of columns in $\bm{x}$. Let $\bm{v} \in \mathbb{R}^K$ be a weight vector on the order variables. The goal of order-based minimization is to select a set of columns $\bm{x} \in \mathcal{X}'$ that minimizes $\bm{v}^\top \bm{z}(\bm{x})$. The class of objective functions considered here encapsulates a large set of fairness measures \cite{tsang2023unified}. In particular, range minimization corresponds to the special case with $\bm{v} = (1, 0, \ldots, 0, -1)^\top$. 

\subsection{Mathematical Formulation}
\label{subsec:order_formulation}

We propose a simple general formulation of the order-based minimization problem. Recall that $I$ denotes the set of column indices. For each $i \in I$ and $k \in [K]$, introduce the auxiliary binary variable $y_{ik}$ that equals one if column $i$ is selected and yields the $k$-th largest payoff, and zero otherwise. Moreover, let continuous variable $z_k$ denote the value of the $k$-th largest payoff. The order-based minimization problem can then be formulated as follows:
\begin{subequations}\label{eq:order-based}
\begin{align}
\min_{\bm{x}, \bm{y}, \bm{z}} \quad & \bm{v}^\top \bm{z} \\
\text{s.t.}\quad & A \bm{x} \leq \bm{b} \\ 
& \sum_{k=1}^K y_{ik} = x_i && \forall i \in I \label{eq:order_coupling} \\ 
& \sum_{i \in I} y_{ik} = 1 && \forall k \in [K] \label{eq:order_assign} \\ 
& \sum_{i \in I} p_{i} y_{ik} = z_{k} && \forall k \in [K] \label{eq:order_values} \\
& z_k \geq z_{k+1} && \forall k \in [K - 1] \label{eq:order_ranking} \\ 
& \bm{x} \in \{0, 1\}^{N} \\
& \bm{y} \in \{0, 1\}^{N K}. 
\end{align}
\end{subequations}
Constraints (\ref{eq:order_coupling}) and (\ref{eq:order_assign}) ensure that each selected column takes exactly one position in the list of ordered payoffs, and that each position in the order corresponds to exactly one column. As a result, precisely $K$ columns are selected. Constraints (\ref{eq:order_assign})-(\ref{eq:order_values}) ensure that the values of $z_k$ are correctly set, while constraints (\ref{eq:order_ranking}) guarantee a consistent ordering.

We make some brief remarks on formulation (\ref{eq:order-based}). First, we are not aware of any other formulation for the order-based minimization problem that does not require additional binary variables or big-$M$ constraints. Second, while $\mathcal{O}(NK)$ auxiliary variables are required, this formulation is still relatively compact in the sense that only $\mathcal{O}(N + K)$ extra linear constraints are introduced (aside from bound and integrality constraints). A smaller formulation can be obtained by projecting out $\bm{x}$ and dropping constraints (\ref{eq:order_coupling}). In fact, solving model (\ref{eq:order-based}) with constraints (\ref{eq:order_coupling}) would necessitate simultaneous column-and-row generation. Third, it is possible to strengthen this formulation by adding valid inequalities. In particular, assuming ordered payoffs $p_1 \geq p_2 \geq \ldots \geq p_N$, one can add  $\mathcal{O}(NK)$ constraints of the type
\begin{align}
    \sum_{i=1}^{l} y_{ik} \geq \sum_{i=1}^{l} y_{i, k+1}
\end{align}
for all $k \in [K - 1]$ and $l \in [N]$. In a branch-and-price algorithm, however, the use of such inequalities would imply the generation of additional rows when pricing out columns, severely complicating the pricing problem.\footnote{In the vehicle routing literature, valid inequalities that make the pricing problem significantly harder are referred to as `non-robust cuts'.} Moreover, our focus is on branching strategies rather than the strength of particular formulations. As such, we consider these inequalities to be outside the scope of this paper. 

It is clear that formulation (\ref{eq:order-based}) can be used for range minimization by setting $\bm{v} = (1, 0, \ldots, 0, -1)^\top$. In fact, the vehicle-index formulation (\ref{eq:vehicle_index}) and agent-based formulation (\ref{eq:fgap}) can be seen as special cases of (\ref{eq:order-based}) where $\bm{x}$ is projected out.

\subsection{Order Branching}
\label{subsec:order_branching}

We present an extension of range branching to the case of order-based minimization, which we call \emph{order branching}. As before, we first formalize the notion of \emph{order-violating} solutions, after which we show how to eliminate these through branching.

For notational convenience, given $\bm{y}\in[0,1]^{NK}$ and $\bm{p} \in [0, M]^N$, for each $k \in [K]$ define $p^{k}_{-}(\bm{y}) = \min \limits_{j = 1, \ldots, k} \min \limits_{i : y_{ij} > 0} p_i$ and $p^{k}_{+}(\bm{y}) = \max \limits_{j = k, \ldots, K} \max\limits_{i : y_{ij} > 0} p_i$ to be the minimum and maximum payoff of any column assigned to order at most or at least $k$, respectively. In this definition, we look at the payoffs of columns beyond index $k$ since a solution can only be order-respecting when the payoffs are in non-increasing order. By definition, we have $p^{k}_{+}(\bm{y}) \geq p^{k}_{-}(\bm{y})$. In fact, equality must hold for any feasible solution $(\bm{x}, \bm{y}, \bm{z})$ to \eqref{eq:order-based}, i.e., it must hold that $z_k = p^{k}_{-}(\bm{y}) = p^{k}_{+}(\bm{y})$ for all $k$. As for range minimization, we can formalize this idea by introducing the notion of \emph{order-respecting solutions}.
\begin{definition}[Order-Respecting Solution] 
Let a vector $(\bar{\bm{x}}, \bar{\bm{y}}, \bm{\bar{z}}) \in [0,1]^{N} \times [0,1]^{NK} \times \bR^{K}$ be given. We say this vector is \emph{order-respecting} (with respect to $\bm{p}$) if, for all $k \in [K]$, it holds that $\bar{z}_k = p^{k}_{+}(\bar{\bm{y}}) = p^{k}_{-}(\bar{\bm{y}})$, and \emph{order-violating} (with respect to $\bm{p}$) otherwise.
\label{def:order_respecting}
\end{definition}
In range-minimization, by optimality it holds that range-violation occurs due to $\eta$ being under- or $\gamma$ being over-estimated. In contrast, order-violation can occur due to $z_k$ being either under- or over-estimated, depending on the sign of entries of $\bm{v}$. In other words, a solution can be order-violating due to $\bar{z}_k>p^{k}_{-}(\bar{\bm{y}})$ or $\bar{z}_k<p^{k}_{+}(\bar{\bm{y}})$ for some $k$.

Similar to range branching, order branching eliminates order-violating solutions by branching on the values of $z_k$. Consider a fractional solution $(\bar{\bm{x}}, \bar{\bm{y}}, \bar{\bm{z}})$ encountered at any node of the branch-and-price tree. When this solution is order-respecting, our branching rule does not create any branches, but proceeds to invoke the problem-specific branching scheme. Suppose now that the solution is order-violating, i.e., there exists some index $k \in [K]$ for which, for example, $z_k > p^{k}_{-}(\bm{y})$. We choose a cutoff value $Z_k \in (p^{k}_{-}(\bm{y}), z_k)$ and create two child nodes. In the left child node, we impose $z_k \leq Z_k$, while, in the right child node, we require $z_k \geq Z_k$. Unlike in range branching, we can now enforce strong locally valid inequalities that forbid columns with payoffs above or below $Z_k$ in \emph{both} child nodes. In particular, in the left child node, we can enforce $\sum_{j \geq k} \sum_{i : p_i > Z_k} y_{ij} = 0$, while, in the right child node, we can impose $\sum_{j \leq k} \sum_{i \in I: p_i < Z_k} y_{ij} = 0$. As before, the valid inequalities are implemented by fixing existing columns to zero and preventing the generation of new columns in the pricing problem. The case $z_{k} < p^{k}_{+}(\bm{y})$ is treated similarly. 

\subsection{Computational Experiments: Gini Deviation CVRP}
\label{subsec:order_results}

An important example of an order-based objective function is the Gini deviation, a frequently used inequity measure \cite{tsang2023unified}. The Gini deviation is defined as $\sum_{i} \sum_{j > i} |z_i - z_j|$, and penalizes the difference between not only the largest and smallest payoffs, but also between intermediate payoffs. It can be represented as an order-based objective function with weight vector entries given by $v_k = (K - 2k + 1)$ for $k=1,\ldots,K$. We evaluate the effectiveness of order branching by minimizing the Gini deviation among the distances of routes in the capacitated vehicle routing problem. 

The set-up of our experiments is highly similar to that of Section~\ref{subsec:cvrp_setup}. We consider the same instances and parameter settings as before, but focus only on the vehicle-indexed formulation. This formulation naturally lends itself to the order-based objective setting, as it is essentially formulation (\ref{eq:order-based}) with $\bm{x}$ projected out. Moreover, it is not immediately clear how to correctly model the Gini deviation in the last-customer formulation. Since all vehicles are homogeneous, without loss of generality, we can assume that vehicle $k$ performs the route with $k$-th largest distance. The pricing problem and algorithm described in Section~\ref{subsec:cvrp_branch} remain valid, although the symmetry-breaking technique presented there can no longer be used. For the sake of completeness, we present the full resulting formulation here:
\begin{subequations}
\begin{align}
\min \quad & \bm{v}^\top \bm{z} \\
\text{s.t.} \quad & \sum_{k \in [K]} \sum_{r \in R} a_{ir} y_{rk} = 1 && \forall i \in C \\
& \sum_{k \in [K]} \sum_{r \in R} c_r y_{rk} \leq B \\
& \sum_{r \in R} y_{rk} = 1 && \forall k \in [K] \\
& \sum_{r \in R} p_r y_{rk} = z_k && \forall k \in [K] \\
& z_k \geq z_{k+1} && \forall k \in [K - 1] \\
& y_{rk} \in \{0, 1\} && \forall k \in [K],~r \in R.
\end{align}
\label{eq:vehicle_order}
\end{subequations}

The results in Table~\ref{tab:results_gini_cvrp} show that the order branching scheme significantly outperforms the classical branching scheme. For all instance sizes, the number of instances solved to optimality grows, and the computing time and optimality gap are reduced, when adding the order branching rule to the classical branching scheme. As for range branching, we observe that this effect is driven by a large reduction in the size of the branch-and-price tree. Moreover, we find that the Gini deviation is reduced by up to 50\% compared to the cost-efficient solution. Interestingly, comparing the results of the vehicle-index formulation for range minimization and order-based minimization, we observe that it is more effective for the latter problem, being able to solve more instances to optimality with the improved branching scheme.

\begin{table}[tb!]
\caption{Computational performance of vehicle-index formulation for Gini deviation with classical and order branching scheme.}
\label{tab:results_gini_cvrp}
\centering 
\renewcommand{\arraystretch}{1}
\begin{tabular}{@{\extracolsep{5pt}}clrrrrr}
\toprule
$|C|$ & Branching & \# Solved & Time (s) & Gap (\%) & \# Nodes & $\Delta$ (\%) \\ 
\midrule
\multirow{2}{*}{15} & Classical & 17/20 & 1,173.7 & 8.5 & 35,079.7 & 39.6 \\ 
& Order & 20/20 & 2.9 & 0 & 86.2 & 46.5 \\ 
\midrule
\multirow{2}{*}{20} & Classical & 3/20 & 3,452.3 & 76.4 & 43,244.9 & 7.5 \\ 
& Order & 20/20 & 55.7 & 0 & 221.9 & 50.7 \\ 
\midrule
\multirow{2}{*}{25} & Classical & 0/20 & 3,600.0 & 99.6 & 19,211.8 & 0.0 \\ 
& Order & 12/20 & 2,078.3 & 18.3 & 340.1 & 38.7 \\  
\bottomrule
\end{tabular}
\end{table}

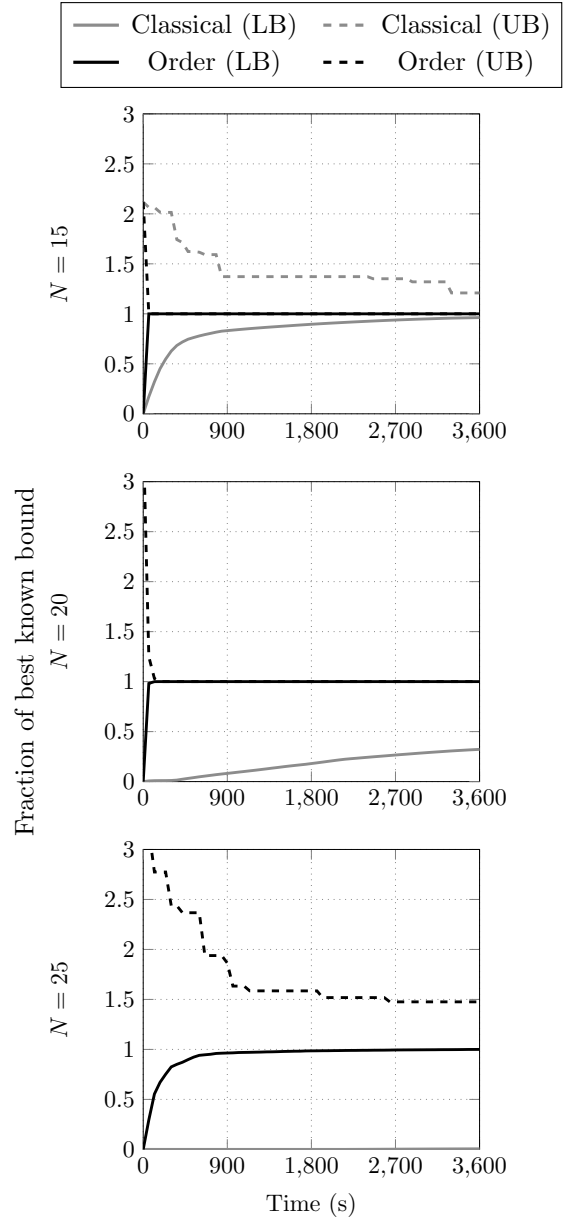
\begin{figure}[tb!]
\centering
\begin{tikzpicture}[scale=0.9]
    \begin{groupplot}[group style={
                      group name=myplot,
                      group size= 1 by 3},
                      height=6cm,
                      width=6.5cm]
    \pgfplotsset{group/every plot/.append style={
        ymin=0, ymax=3, xmin=0, xmax=3600, extra x ticks={0}, extra y ticks={0}, grid, grid style={opacity = 0.5, dotted, black}, disabledatascaling, xtick={900, 1800, 2700, 3600}, ytick={0.5, 1.0, 1.5, 2.0, 2.5, 3.0}}}
        
    \nextgroupplot[ylabel={$N=15$}]
    \addplot[very thick, color=black!45] table[x index=0, y index=1, col sep=semicolon] {ResultsGini/bounds_15_0.csv};\label{plots:plot5}

    \addplot[very thick, dashed, color=black!45]
     table[x index=0, y index=2, col sep=semicolon] {ResultsGini/bounds_15_0.csv}; \label{plots:plot6}

    \addplot[very thick, color=black] table[x index=0, y index=1, col sep=semicolon] {ResultsGini/bounds_15_1.csv};\label{plots:plot7}

    \addplot[very thick, dashed, color=black] table[x index=0, y index=2, col sep=semicolon] {ResultsGini/bounds_15_1.csv};\label{plots:plot8}

    \nextgroupplot[ylabel={$N=20$}]
    \addplot[very thick, color=black!45] table[x index=0, y index=1, col sep=semicolon] {ResultsGini/bounds_20_2.csv};

    \addplot[very thick, dashed, color=black!45]
     table[x index=0, y index=2, col sep=semicolon] {ResultsGini/bounds_20_2.csv};

    \addplot[very thick, color=black] table[x index=0, y index=1, col sep=semicolon] {ResultsGini/bounds_20_3.csv};

    \addplot[very thick, dashed, color=black] table[x index=0, y index=2, col sep=semicolon] {ResultsGini/bounds_20_3.csv};
       
    \nextgroupplot[xlabel={Time (s)}, ylabel={$N=25$}]
    \addplot[ultra thick, color=black!45] table[x index=0, y index=1, col sep=semicolon] {ResultsGini/bounds_25_4.csv};

    \addplot[very thick, dashed, color=black!45]
     table[x index=0, y index=2, col sep=semicolon] {ResultsGini/bounds_25_4.csv};
     
    \addplot[very thick, color=black] table[x index=0, y index=1, col sep=semicolon] {ResultsGini/bounds_25_5.csv};

    \addplot[very thick, dashed, color=black] table[x index=0, y index=2, col sep=semicolon] {ResultsGini/bounds_25_5.csv};
    \end{groupplot}
    \path (myplot c1r1.outer north west)
          -- node[anchor=south,rotate=90] {Fraction of best known bound}
          (myplot c1r3.outer south west);
\path (myplot c1r1.north west|-current bounding box.north)--
      coordinate(legendpos)
      (myplot c1r1.north east|-current bounding box.north);
\matrix[
    matrix of nodes,
    anchor=south,
    draw,
    inner sep=0.2em,
    draw
  ]at([yshift=1ex]legendpos)
  {\ref{plots:plot5}& Classical (LB)&[5pt]
\ref{plots:plot6}& Classical (UB)\\
\ref{plots:plot7}& Order (LB)&[5pt]
\ref{plots:plot8}& Order (UB)\\};
\end{tikzpicture}
\caption{Progression of lower and upper bound of vehicle-index formulation for Gini deviation with the classical and order branching scheme.}
\label{fig:results_gini_bounds}
\end{figure}

Figure~\ref{fig:results_gini_bounds} presents the progression of upper and lower bounds throughout the course of the branch-and-price algorithm. As with range branching, we find that order branching is extremely effective at quickly driving up the lower bound as compared to a classical branching scheme. In addition, we are able to identify high-quality integer solutions earlier on, leading to an improved upper bound development. 

We have also experimented with using formulation (\ref{eq:vehicle_order}) to solve the range minimization problem. Interestingly, the order-based formulation seems to slightly outperform the vehicle-index formulation (\ref{eq:vehicle_index}), but is not as good as the last-customer formulation \eqref{eq:customer_formulation}.

\section{Conclusion}
\label{sec:conclusion}

In this work, we proposed a generic branch-and-price approach for range minimization problems. Our approach relies on range branching, a branching rule that effectively cuts off fractional nodes underestimating the true range of the solution. We have shown several desirable properties of the branching rule, and confirmed its strong computational performance in extensive experiments on instances of the fair capacitated vehicle routing problem and fair generalized assignment problem. Using our new branching rule, we were able to solve many more large instances to optimality, compared to using classical branching strategies. Finally, we successfully generalized our approach to the case of order-based objective functions, such as the Gini deviation.

Depending on the problem at hand, some critical remarks on range branching are in place. First, minimizing the range can have unintended negative consequences in the form of `wasteful' columns being selected in optimal solutions. For example, in our capacitated vehicle routing application we observed that the range was reduced through the use of TSP-suboptimal routes. The degree to which this is problematic and can be avoided is problem-specific. Second, imposing the range branching decisions can make the pricing problem degenerate. This was observed in the capacitated vehicle routing problem, where imposing our branching strategy caused a weakening of label dominance rules. The issue did not occur in the generalized assignment problem, where the efficiency of the pricing algorithm was not affected by range branching decisions. Moreover, the increased computing time per branch-and-price node might still be compensated by a large reduction in the number of nodes. Once again, the net effect of applying range branching is largely problem-specific.

Several promising directions for future research remain. First, it would be interesting to apply our approach to other problems, such as railway crew scheduling or airline crew pairing. Second, Section~\ref{subsec:branching_practical} contains many suggestions for different ways of implementing range branching, including, for example, layered branching schemes with increasingly tight cutoff values. Third, our approach can be readily applied to problems where the objective consists of minimizing the maximum payoff, or maximizing the minimum payoff. Fourth, efficiently ensuring TSP-optimality of selected routes in the CVRP remains an interesting open problem. Finally, while range branching focuses on improving the lower bound throughout the branch-and-price algorithm, it seems promising to combine this with heuristics aimed at providing tight upper bounds.

\section{Acknowledgements}

We are grateful for the fruitful suggestions of three anonymous reviewers. We also thank dr. Twan Dollevoet for his valuable feedback on earlier versions of this manuscript. 

\bibliography{sn-bibliography}

\end{document}